\documentclass[12pt]{article}
\usepackage{fullpage,amsthm,amsmath,amssymb,xspace,url,relsize}

\newcounter{thmctr}
\newtheorem{thm}[thmctr]{Theorem}
\newtheorem{prop}[thmctr]{Proposition}
\newtheorem{cor}[thmctr]{Corollary}
\newtheorem{lemma}[thmctr]{Lemma}
\newtheorem*{definition}{Definition}
\theoremstyle{definition}
\newtheorem*{constr}{Construction}
\newtheorem*{example}{Example}
\theoremstyle{plain}

\newcommand{\discp}[1]{\texttt{Disc}$^{(k)}(#1,\mathsmaller{\mathsmaller{\geq}}p,\mu)$\xspace}
\newcommand{\discIk}{\discp{\mathcal{I} }}
\newcommand{\disca}[1]{\texttt{Disc}$^{(k-1)}(#1,\mathsmaller{\mathsmaller{\geq}}\alpha,\mu)$\xspace}
\newcommand{\discImone}{\disca{\mathcal{J} }}

\newcommand{\discIstrong}{\texttt{Disc}$^{(k)}(\mathcal{I},p,\mu)$\xspace}

\newcommand{\cdk}{\discp{\binom{[k]}{k-1} }}
\newcommand{\cdkmone}{\disca{\binom{[k-1]}{k-2} }}

\newcommand{\discthree}{\texttt{Disc}$^{(3)}(\mathcal{I},\mathsmaller{\mathsmaller{\geq}}p,\mu)$\xspace}
\newcommand{\disctwo}{\texttt{Disc}$^{(2)}(\mathcal{I},\mathsmaller{\mathsmaller{\geq}}p,\mu)$\xspace}
\newcommand{\discone}{\texttt{Disc}$^{(1)}(\{\emptyset\},\mathsmaller{\mathsmaller{\geq}}\alpha,\mu)$\xspace}

\DeclareMathOperator{\inj}{\text{inj}}

\newcommand{\dhruvuni}{University of Illinois at Chicago \\ mubayi@math.uic.edu}
\newcommand{\johnuni}{University of Illinois at Chicago \\ lenz@math.uic.edu}
\newcommand{\dhruvfoot}{\footnote{Research supported in part by  NSF Grants 0969092 and 1300138.}}
\newcommand{\johnfoot}{\footnote{Research partly supported by NSA Grant H98230-13-1-0224.}}

\title{Perfect Packings in Quasirandom Hypergraphs II}
\author{John Lenz \johnfoot \\ \johnuni \and Dhruv Mubayi \dhruvfoot \\ \dhruvuni}

\begin{document}

\maketitle

\begin{abstract}
For each of the notions of hypergraph quasirandomness that have been studied, we identify a large
class of hypergraphs $F$ so that every quasirandom hypergraph $H$ admits a
perfect $F$-packing.  An informal statement of a special case of our general result for 3-uniform
hypergraphs is as follows.  Fix an integer $r \geq 4$ and $0<p<1$. Suppose that $H$ is an $n$-vertex
triple system with $r|n$ and the following two properties:
\begin{itemize}
  \item for every graph $G$ with $V(G)=V(H)$, at least $p$ proportion 
of the triangles in $G$ are also edges of $H$,

  \item for every vertex $x$ of $H$, the link graph of $x$ is a 
quasirandom graph with density at least $p$.
\end{itemize}
Then  $H$ has a perfect $K_r^{(3)}$-packing.  Moreover, we show that neither hypotheses above can be
weakened, so in this sense our result is tight.  A similar conclusion for this special case can be
proved by Keevash's hypergraph blowup lemma, with a slightly stronger hypothesis on $H$.
\end{abstract}

\section{Introduction} 
\label{sec:Introduction}

A $k$-uniform hypergraph $H$ ($k$-graph for short) is a collection of $k$-element subsets (edges) of
a vertex set $V(H)$. For a $k$-graph $H$ and a subset $S$ of vertices of size at most $k-1$, define
the $(k-|S|)$-graph $N_H(S) := \{ T \subseteq V(H) - S : T \cup S \in H\}$.  Also, let $d_H(S) =
|N_H(S)|$.  When $S = \{x\}$, we write $N_H(x)$ and $d_H(x)$.  The \emph{minimum $\ell$-degree} of
$H$, written $\delta_\ell(H)$, is the minimum of $d_H(S)$ taken over all $\ell$-sets $S \in
\binom{V(H)}{\ell}$.  The \emph{minimum codegree} of $H$ is $\delta_{k-1}(H)$ and the \emph{minimum
degree} is $\delta(H) = \delta_1(H)$.  The complete $k$-graph on $r$ vertices, denoted $K^{(k)}_r$
(or sometimes just $K_r$) is the $k$-graph with vertex set $[r]$ and all $\binom{r}{k}$ edges.  If
$H$ is a $k$-graph and $x \in V(H)$, the \emph{link of $x$}, written $L_H(x)$, is the $(k-1)$-graph
whose vertex set is $V(H) - \{x\}$ and whose edge set is $N_H(x)$.  We write $v(H)$ for $|V(H)|$.

Let $G$ and $F$ be $k$-graphs.  We say that $G$ has a \emph{perfect $F$-packing} if the vertex set
of $G$ can be partitioned into copies of $F$.  Minimum degree conditions that force perfect
$F$-packings in graphs have a long history and have been well studied~\cite{pp-alon96, pp-hajnal70,
pp-komlos01, pp-kuhn09}.  In the past decade there has been substantial interest in extending these
result to $k$-graphs~\cite{pp-czgrinow13, pp-han09, pp-keevash13, pp-kahn11, pp-kahn13,
pp-kuhn06-cherry,pp-kuhn13-fractional, pp-kuhn13, pp-lo-kfour, pp-lo13, pp-markstrom11,
pp-pikhurko08, pp-rodl09, pp-treglown09, pp-treglown13}.  Despite this activity many basic questions
in this area remain open.  For example, for $k \geq 5$ the minimum degree threshold which forces a
perfect matching in $k$-graphs is not known.

A key ingredient in the proofs of most of the previously cited results are specially designed
random-like or quasirandom properties of $k$-graphs that imply the existence of perfect
$F$-packings.  There is a rather well-defined notion of quasirandomness for graphs that originated
in early work of Thomason~\cite{qsi-thomason87,qsi-thomason87-2} and
Chung-Graham-Wilson~\cite{qsi-chung89}.  These graph quasirandom properties, when generalized to
$k$-graphs, provide a rich structure of inequivalent hypergraph quasirandom properties
(see~\cite{hqsi-lenz-poset12, hqsi-towsner14}).  In~\cite{pp-lenz14}, the authors studied in detail
the packing problem for the simplest of these quasirandom properties, the so-called weak hypergraph
quasirandomness.  A hypergraph is \emph{linear} if every two edges share at most one vertex.
Results of \cite{pp-lenz14} showed that weak hypergraph quasirandomness and an obvious minimum
degree condition suffices to obtain perfect $F$-packings for all linear $F$, but the result does not
hold for certain $F$ that are very close to being linear.

In this paper, we address the packing problem for the other quasirandom properties.  A special case
of our result identifies what hypergraph quasirandom property and what condition on the link of each
vertex is required in order to be able to guarantee a perfect $K^{(k)}_r$-packing for all $r$ (which
implies a perfect $F$-packing for all $F$). The quasirandom property naturally has great resemblance
to those used in the various (strong) hypergraph regularity lemmas.   Keevash's hypergraph blowup
lemma~\cite{pp-keevash11} has as a corollary that the super-regularity of complexes implies the
existence of perfect packings, but our main result below (Theorem~\ref{thm:cdpack}) shows that a
weaker notion of quasirandomness is enough to obtain perfect packings of complete hypergraphs.  In
fact, we are able to do more: for many of the hypergraph quasirandom properties that have been
studied previously in the literature, we give a class of hypergraphs $F$ for which we can find a
perfect packing.  Before stating Theorem~\ref{thm:cdpack}, we need to define these notions of
hypergraph quasirandomness.

\subsection{Notions of Hypergraph Quasirandomness} 
\label{sub:definitions}

Our definitions are closely related to the definitions by Towsner~\cite{hqsi-towsner14}, which gives
the most general treatment of hypergraph quasirandomness.

\begin{definition}
  Let $X$ be a finite set and let $2^X = \{A : A \subseteq X\}$.  An \emph{antichain} is an
  $\mathcal{I} \subseteq 2^X$ such that $A \subsetneq B$ for all $A, B \in \mathcal{I}$.  A
  \emph{full antichain} is an antichain $\mathcal{I} \subseteq 2^X$ such that $|\mathcal{I}| \geq 2$
  and for all $x \in X$, there exists $I \in \mathcal{I}$ with $x\in I$.
\end{definition}

\begin{definition}
  Let $k \geq 1$, let $\mathcal{I} \subseteq 2^{[k]}$ be an antichain, and let $H$ be a $k$-graph.
  An \emph{$\mathcal{I}$-layout in $H$} is a tuple of uniform hypergraphs $\Lambda = (\lambda_I)_{I
  \in \mathcal{I}}$ where $\lambda_I$ is an $|I|$-uniform hypergraph on vertex set $V(H)$.  If
  $\Lambda$ is an $\mathcal{I}$-layout, then the \emph{$k$-cliques of $\Lambda$}, denoted
  $K_k(\Lambda)$, is the set of all vertex tuples $(x_1, \dots, x_k)$ such that $x_1, \dots, x_k$
  are distinct vertices and for each $I \in \mathcal{I}$, $\{x_i : i \in I\} \in \lambda_I$.  In an
  abuse of notation, we will denote by $H \cap K_k(\Lambda)$ the $k$-tuples $(x_1,\dots,x_k)$ such
  that $(x_1,\dots,x_k) \in K_k(\Lambda)$ and $\{x_1,\dots,x_k\} \in H$.
\end{definition}

We now are ready to define hypergraph quasirandomness.

\begin{definition}
  Let $0 < \mu,p < 1$.  A $k$-graph $H$ satisfies \discIk if for every $\mathcal{I}$-layout
  $\Lambda$,
  \begin{align*}
    |H \cap K_k(\Lambda)| \geq p |K_k(\Lambda)|  - \mu n^k.
  \end{align*}
  The stronger property \discIstrong
  stipulates that for every $\mathcal{I}$-layout $\Lambda$,
  \begin{align*}
    \Big| |H \cap K_k(\Lambda)| - p|K_k(\Lambda)| \Big| \leq \mu n^k.
  \end{align*}
\end{definition}

\begin{example}
  Let $k = 3$ and $\mathcal{I} = \{\{1,2\}, \{2, 3\}\}$.  A $3$-graph $H$ satisfies
  \discthree if for every two graphs $\lambda_{12}$ and $\lambda_{23}$ with vertex set $V(H)$, the
  number of tuples $(x,y,z)$ with $\{x,y,z\} \in H$, $xy \in \lambda_{12}$, and $yz \in
  \lambda_{23}$ is at least $p|K_3(\lambda_{12},\lambda_{23})| - \mu n^3$, where
  $K_3(\lambda_{12},\lambda_{23})$ is the set of tuples $(x,y,z)$ with $xy \in \lambda_{12}$ and $yz
  \in \lambda_{23}$.
\end{example}

Several special cases of this definition deserve mention, since essentially all previously studied
hypergraph quasirandomness properties are related to \discIk for some
$\mathcal{I}$.

\begin{itemize}
  \item When $\mathcal{I} = \{ \{1\}, \dots, \{k\}\}$, then \discIk is exactly the property
    $(p,\frac{\mu}{k!})$-dense from~\cite{pp-lenz14} and is closely related to weak quasirandomness
    studied in~\cite{hqsi-conlon12, hqsi-dellamonica11, hqsi-kohayakawa10, hqsi-shapira12}. 

  \item More generally, when $\mathcal{I}$ is a partition the property \discIstrong is essentially
    the property \texttt{Expand[$\pi$]} studied in~\cite{hqsi-lenz-quasi12,
    hqsi-lenz-quasi12-nonregular, hqsi-lenz-poset12}.  In particular, when $\mathcal{I} =
    \{\{1,\dots,k-1\},\{k\}\}$, then \discIk is essentially equivalent to the property considered
    recently by Keevash (the property called ``typical'' in~\cite{design-keevash14}) in his recent
    proof of the existence of designs.

  \item When $\mathcal{I} = \binom{[k]}{\ell}$, then \discIstrong is closely related to the property
    \texttt{CliqueDisc}[$\ell$] studied in~\cite{hqsi-chung12, hqsi-chung90, hqsi-chung90-2,
    hqsi-chung91, hqsi-chung92, hqsi-kohayakawa02, hqsi-lenz-poset12}.

  \item When $\mathcal{I} = \{ I \in \binom{[k]}{k-1} : \{1,\dots,\ell\} \subseteq I\}$, then
    \discIstrong is essentially the same as the property \texttt{Deviation}[$\ell$] studied
    in~\cite{hqsi-chung90-2, hqsi-chung91, hqsi-chung90, hqsi-kohayakawa02, hqsi-lenz-poset12}.

  \item Finally, note that \discp{\{\emptyset\}} is equivalent to $|H| \geq p
    \binom{v(H)}{k} - \frac{\mu}{k!} n^k$, since $K_k(\{\emptyset\})$ is the set of all ordered
    $k$-tuples of distinct vertices.
\end{itemize}
  
\begin{definition}
  Let $\mathcal{I} \subseteq 2^{[k]}$ be an antichain.  A $k$-graph $F$ is
  \emph{$\mathcal{I}$-adapted} if there exists an ordering $E_1, \dots, E_m$ of the edges of $F$ and
  bijections $\phi_i : E_i \rightarrow [k]$ such that for each $1 \leq j < i \leq m$, the following
  holds: there exists an $I \in \mathcal{I}$ with $\{ \phi_i(x) : x \in E_j \cap E_i\} \subseteq I
  \in \mathcal{I}$.  In words, $F$ is \emph{$\mathcal{I}$}-adapted if the set of labels assigned to
  $E_i$ which appear on $E_j \cap E_i$ is a subset of a set in $\mathcal{I}$.

  Let $\mathcal{I} \subseteq 2^{[k]}$ and $\mathcal{J} \subseteq 2^{[k-1]}$ be antichains.  A
  $k$-graph $F$ is \emph{$(\mathcal{I},\mathcal{J})$-adapted} if $F$ is $\mathcal{I}$-adapted and
  there exists $x \in V(F)$, an ordering $E_1, \dots, E_m$ of the edges of $F$, and bijections
  $\psi_i : E_i \rightarrow [k]$ such that for all $1 \leq j < i \leq m$, the following holds.
  \begin{itemize}
    \item If $x \notin E_i$ then there exists $I \in \mathcal{I}$ with
      $\{\psi_i(y) : y \in E_j \cap E_i\} \subseteq I$. 

    \item  If $x \in E_i$ then $\psi_i(x) = k$ and there exists $J \in \mathcal{J}$ with $\{\psi_i(y) : y \in
      E_j \cap E_i, y \neq x\} \subseteq J$.
  \end{itemize}
\end{definition}

\subsection{Our Results} 
\label{sub:our-results}

The following is our main result.

\begin{thm} \label{thm:cdpack}
  Let $k \geq 2$, $\mathcal{I} \subseteq 2^{[k]}$ be a full antichain, $\mathcal{J} \subseteq
  2^{[k-1]}$, and $0 < \alpha,p < 1$.  For every $(\mathcal{I},\mathcal{J})$-adapted $k$-graph $F$,
  there exists $\mu > 0$ and $n_0$ so that the following holds.  Let $H$ be an $n$-vertex $k$-graph
  where $n \geq n_0$ and $v(F) | n$.  Suppose that $H$ satisfies \discIk and that $L_H(x)$ satisfies
  \discImone for all $x \in V(H)$.  Then $H$ has a perfect $F$-packing.
\end{thm}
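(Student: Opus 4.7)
The plan is to deploy the absorbing method of Rödl--Ruciński--Szemerédi, adapted to the quasirandom setting. Three ingredients are needed: (a) an embedding lemma that counts copies of $F$ in $H$ (and in large sub-hypergraphs); (b) an almost-perfect cover of $V(H)\setminus A$ by disjoint copies of $F$ leaving only $o(n)$ vertices uncovered; and (c) an absorbing set $A\subseteq V(H)$ of sublinear size that can swallow any small leftover $T$ with $v(F)\mid |A|+|T|$. Concatenating (b) and (c) gives a perfect $F$-packing.

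For the embedding lemma, I fix the distinguished vertex $x_0\in V(F)$ and the orderings $E_1,\dots,E_m$ with bijections $\phi_i,\psi_i$ guaranteed by the definition of $(\mathcal{I},\mathcal{J})$-adapted, then embed $F$ into $H$ edge by edge. When processing $E_i$ with $x_0\notin E_i$, the positions of the already-placed vertices of $E_i$ together with the $\mathcal{I}$-adapted overlap constraints combine into an $\mathcal{I}$-layout on $V(H)$ whose cliques are in bijection with the valid extensions of $E_i$; quasirandomness \discIk then yields $\Omega(n^{|E_i\setminus \bigcup_{j<i}E_j|})$ choices, with the defect $\mu n^k$ absorbed by taking $\mu$ small relative to $p$. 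When $x_0\in E_i$, we instead work inside the link $L_H(y)$ of the image $y$ of $x_0$; the $\mathcal{J}$-adapted property lets us build a $\mathcal{J}$-layout on $V(L_H(y))$, and \discImone supplies the extension count. Iterating over $i=1,\dots,m$ produces $\Omega(n^{v(F)})$ copies of $F$ in $H$, and, crucially, $\Omega(n^{v(F)-1})$ copies through any prescribed vertex $y$ playing the role of $x_0$.

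The almost-perfect cover (b) is then a routine greedy removal: both quasirandomness hypotheses pass to induced sub-hypergraphs of linear size with only a constant-factor loss in the parameters, so we may repeatedly apply the embedding lemma to remove a copy of $F$ from $V(H)\setminus A$ until fewer than $\delta n$ vertices remain, with $\delta\to 0$ as $\mu\to 0$.

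The main obstacle is step (c), the construction of $A$. For a $v(F)$-set $R\subseteq V(H)$, call a set $X\subseteq V(H)\setminus R$ an \emph{$R$-absorber} if both $X$ and $X\cup R$ admit perfect $F$-packings. Using the embedding lemma applied with each vertex of $R$ playing the role of $x_0$ in turn (this is where the link hypothesis \discImone is essential), I would prove that every $v(F)$-set $R$ has $\Omega(n^{cv(F)-v(F)})$ absorbers of size $cv(F)$, for a constant $c=c(F)$ chosen so that appropriate ``merge and rewire'' copies of $F$ through the vertices of $R$ can be assembled. A standard probabilistic deletion argument (Markov plus union bound over the exponentially many candidate leftovers) then selects a set $A\subseteq V(H)$ of size $o(n)$ such that, for every $v(F)$-set $R$, $A$ contains a linear-in-$|A|$ number of disjoint $R$-absorbers; $A$ can therefore swallow any leftover $T$ with $|T|\le \eta n$ by partitioning $T$ into $v(F)$-blocks and absorbing them sequentially with disjoint absorbers. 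The technical heart is to verify that the absorber count survives the probabilistic selection and the various overlap conditions, and that the link quasirandomness is strong enough to produce absorbers anchored at prescribed vertices of $R$; once this is in place, the theorem follows by combining (b), (c), and the trivial divisibility $v(F)\mid n$.
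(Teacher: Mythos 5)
Your plan follows the same three-step absorbing strategy the paper uses: an embedding lemma at prescribed anchor vertices (Lemma~\ref{lem:embeddingatvertex}), a greedy almost-perfect $F$-packing (Lemma~\ref{lem:greedypacking}), and an absorbing set obtained from a richness argument (Lemmas~\ref{lem:richness} and~\ref{lem:absorbing}). In the paper, as in your sketch, the embedding lemma is proved by processing the edges in the order given by the $(\mathcal{I},\mathcal{J})$-adapted definition and, at each step, packaging the constraints from already-placed vertices into an $\mathcal{I}$-layout on $V(H)$ (or a $\mathcal{J}$-layout in the link of the anchor's image), then invoking \discIk (or \discImone). The absorbers are likewise built by embedding a gadget through the $v(F)$ prescribed anchor vertices.

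You leave, however, the one genuinely non-routine verification unaddressed. Your ``merge and rewire'' gadget is, in the paper's proof, a concrete $f\times f$ grid hypergraph $F'$ (with $f=v(F)$): for each row $i\geq 1$ the set $\{x_{i,0},\dots,x_{i,f-1}\}$ spans a copy of $F$, for each column $j$ the set $\{x_{0,j},\dots,x_{f-1,j}\}$ spans a copy of $F$, and the zeroth row serves as the anchor set $R$ (rows give a perfect $F$-packing of $A$, columns give one of $A\cup R$). To apply the embedding lemma to $F'$ at the $f$ anchor vertices, one must show that $F'$ is itself $(\mathcal{I},\mathcal{J})$-adapted at those anchors, and this does \emph{not} follow formally from $F$ being $(\mathcal{I},\mathcal{J})$-adapted: two edges of $F'$ lying in a different row and column can meet in a single vertex $u$, and the definition then demands some $I\in\mathcal{I}$ with $\phi_i(u)\in I$. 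This is exactly where the hypothesis that $\mathcal{I}$ is a \emph{full} antichain enters --- it is the only place in the entire proof that fullness is used --- and if you do not observe this, the absorber step does not close. Two smaller bookkeeping slips: for the richness one wants $\Omega(n^a)$ absorbing sets of size $a$ (here $a=f^2-f$), not $\Omega(n^{cv(F)-v(F)})$ sets of size $cv(F)$; and the union bound in the random selection runs over the polynomially many $v(F)$-sets $R$, not over the exponentially many possible leftovers $T$, after which $T$ is absorbed greedily block by block via disjoint absorbers.
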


It is straightforward to see that if $\mathcal{I}$ and $\mathcal{I}'$ are such that for every $I'
\in \mathcal{I}'$, there exists $I \in \mathcal{I}$ with $I' \subseteq I$, then \discp{\mathcal{I}}
$\Rightarrow$ \discp{\mathcal{I'}}. Also, if $\mathcal{I} = \binom{[k]}{k-1}$  and $\mathcal{J} =
\binom{[k-1]}{k-2}$, then every $F$ is $(\mathcal{I},\mathcal{J})$-adapted.  Thus to find the
weakest quasirandom condition to apply Theorem~\ref{thm:cdpack} to a given $k$-graph $F$, one should
find the minimal $\mathcal{I}$ and $\mathcal{J}$ for which $F$ is
$(\mathcal{I},\mathcal{J})$-adapted.  For example, if $C = \{abc, bcd, def, aef\}$, then $C$ is
$(\mathcal{I},\mathcal{J})$-adapted where $\mathcal{I} = \{\{1,2\},\{3\}\}$ and $\mathcal{J} =
\{\emptyset\}$ (let $x = a$ and order the edges which contain $a$ first).

As mentioned above, special cases of \discIk correspond to previously studied quasirandom properties
so that Theorem~\ref{thm:cdpack} generalizes several previous results.

\begin{itemize}
  \item Let $k = 2$.  The only full antichain is $\mathcal{I} = \{\{1\}, \{2\}\}$.  For this
    $\mathcal{I}$, all graphs $F$ are $(\mathcal{I},\mathcal{J})$-adapted if $\mathcal{J} =
    \{\emptyset\}$.  To see this, pick $x \in V(F)$ and place all edges incident to $x$ first in the
    ordering for the definition of $(\mathcal{I},\mathcal{J})$-adapted. Now the property \disctwo
    just states that $G$ is quasirandom (in fact only ``one-sided'' quasirandom).  Also, the
    condition ``$L_H(x)$ satisfies \discone for every $x \in V(H)$'' is equivalent to the condition
    that $\delta(H) \geq (\alpha - \mu)(n-1)$.  To see this, recall from before that if $H'$ is an
    $r$-graph the property ``$H'$ satisfies \discone is equivalent to the property that $|H'|\geq
    \alpha \binom{v(H')}{r} - \frac{\mu}{r!} v(H')^{r}$.  Thus Theorem~\ref{thm:cdpack} for $k =
    2$ states that if $G$ is an $n$-vertex quasirandom graph, $v(F) | n$, and $\delta(G) \geq
    (\alpha - \mu)(n-1)$, then $G$ has a perfect $F$-packing.  This fact is a simple consequence of
    the blowup lemma of Koml\'os-S\'ark\"ozy-Szemer\'edi~\cite{pp-komlos97}.

  \item For $k \geq 2$ with $\mathcal{I}$ a partition into singletons, we obtain exactly
    \cite[Theorem 3]{pp-lenz14}.  In this case, \discIk is equivalent to $(p,\frac{\mu}{k!})$-dense
    from~\cite{pp-lenz14}, an $\mathcal{I}$-adapted $k$-graph is a linear $k$-graph, and one can
    take $\mathcal{J} = \{\emptyset\}$.  Similar to the previous paragraph, the condition ``$L_H(x)$
    satisfies \disca{\{\emptyset\}} for every $x \in V(H)$'' is equivalent to the condition that
    $\delta(H) \geq \alpha \binom{v(H)-1}{k-1} - \frac{\mu}{(k-1)!} v(H)^{k-1}$. 

  \item If $\mathcal{I} = \binom{[k]}{k-1}$ and $\mathcal{J} = \binom{[k-1]}{k-2}$ then every
    $k$-graph $F$ is $(\mathcal{I},\mathcal{J})$-adapted.  Thus Theorem~\ref{thm:cdpack} implies the
    following corollary.
    \begin{cor} \label{cor:main}
      Fix $2 \leq k \leq r$.  For every $0 < \alpha, p < 1$, there exists $\mu > 0$ and $n_0$ such
      that the following holds.  Let $H$ be an $n$-vertex $k$-graph with $n \geq n_0$ and $r|n$.  If
      $H$ satisfies \cdk and $L_H(x)$ satisfies \cdkmone for every $x \in V(H)$, then $H$ has a
      perfect $K^{(k)}_r$-packing.
    \end{cor}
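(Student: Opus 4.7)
The plan is to deduce Corollary~\ref{cor:main} directly from Theorem~\ref{thm:cdpack} by taking $F = K_r^{(k)}$, $\mathcal{I} = \binom{[k]}{k-1}$, and $\mathcal{J} = \binom{[k-1]}{k-2}$. The two discrepancy hypotheses on $H$ and on the links $L_H(x)$ are word-for-word the hypotheses of the theorem, and $\binom{[k]}{k-1}$ is a full antichain of $2^{[k]}$ whenever $k \geq 2$ (it has $k \geq 2$ members, and every $i \in [k]$ lies in $[k]\setminus\{j\}$ for any $j \neq i$). So the only substantive task is to verify the structural claim asserted without proof in the paragraph following Theorem~\ref{thm:cdpack}: for this choice of $\mathcal{I}$ and $\mathcal{J}$, every $k$-graph $F$ -- in particular $K_r^{(k)}$ -- is $(\mathcal{I},\mathcal{J})$-adapted.

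To verify this, I would pick any vertex $x \in V(F)$, order the edges $E_1,\ldots,E_m$ of $F$ so that all edges containing $x$ appear first, and then take the bijections $\phi_i = \psi_i \colon E_i \to [k]$ to be arbitrary subject only to the single constraint that $\psi_i(x) = k$ whenever $x \in E_i$. Both the $\mathcal{I}$-adapted condition and the two bullet points of $(\mathcal{I},\mathcal{J})$-adaptation then reduce to the trivial but crucial observation that any two distinct $k$-element edges share at most $k-1$ vertices. Concretely, for $j < i$ the set $\{\psi_i(y) : y \in E_j \cap E_i\}$ is a subset of $[k]$ of size at most $k-1$, hence contained in some $I \in \binom{[k]}{k-1}$; moreover, when $x \in E_i$ the chosen ordering forces $x \in E_j$ as well, so $x \in E_j \cap E_i$, and the set $\{\psi_i(y) : y \in E_j \cap E_i, \, y \neq x\}$ is a subset of $[k-1]$ (since only $x$ carries the label $k$) of size at most $k-2$, hence extends to some $J \in \binom{[k-1]}{k-2}$.

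Once $(\mathcal{I},\mathcal{J})$-adaptation is established, Theorem~\ref{thm:cdpack} applied with $F = K_r^{(k)}$ immediately yields the perfect $K_r^{(k)}$-packing in $H$, using the divisibility hypothesis $r \mid n = v(F) \mid n$. There is essentially no serious obstacle in the corollary itself: it is a matter of unpacking definitions to see that the case $\mathcal{I} = \binom{[k]}{k-1}$, $\mathcal{J} = \binom{[k-1]}{k-2}$ is ``universal'' across target $k$-graphs, and all the real combinatorial work is hidden in the proof of Theorem~\ref{thm:cdpack} itself.
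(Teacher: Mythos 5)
Your proof is correct and follows exactly the route the paper takes: the paper deduces the corollary by applying Theorem~\ref{thm:cdpack} with $F = K_r^{(k)}$, $\mathcal{I} = \binom{[k]}{k-1}$, $\mathcal{J} = \binom{[k-1]}{k-2}$, after asserting (without proof) that every $k$-graph is $(\mathcal{I},\mathcal{J})$-adapted for this choice. The only difference is that you actually verify that assertion (correctly: order the edges through a chosen $x$ first, label $x$ by $k$, and use $|E_i \cap E_j| \leq k-1$ together with the observation that $x \in E_j$ whenever $x \in E_i$ and $j < i$), whereas the paper leaves it as a one-line remark.
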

    Keevash's hypergraph blowup lemma~\cite{pp-keevash11} also guarantees perfect
    $K^{(k)}_r$-packings under certain regularity conditions, however the hypotheses of
    Corollary~\ref{cor:main} are slightly weaker.  Indeed, the main extra requirement that
    \cite{pp-keevash11} places on $H$ is \cite[Definition 3.16 part \textit{(iii)}]{pp-keevash11};
    translated into our language, for $3$-graphs this property says roughly that for every $x \in
    V(H)$, if $W$ is a set of triples where each triple contains some pair from $L_H(x)$, then $|H
    \cap W| \approx p |W|$.
\end{itemize}

Next, we investigate if either of the conditions \discIk or \discImone in the links from
Theorem~\ref{thm:cdpack} can be weakened.  This question was studied by the authors~\cite{pp-lenz14}
in detail when $\mathcal{I}$ is a partition, and it turns out that for certain non-linear $F$ it is
possible to weaken the conditions (see~\cite{pp-lenz14} for details).  Most likely, the
constructions and results from~\cite{pp-lenz14} can be generalized to all $\mathcal{I}$.  In this
paper, we focus only on the case $\mathcal{I} = \binom{[k]}{k-1}$ and $\mathcal{J} =
\binom{[k-1]}{k-2}$, which corresponds to the condition required for perfect $K^{(k)}_r$-packings.
In this case, neither condition can be weakened, so that Theorem~\ref{thm:cdpack} cannot be improved
in general.

\begin{prop} \label{prop:constr-fail-kminusone}
  For every $k \geq 3$ there exists an $r$ (depending only on $k$) such that the following holds.
  Let $\alpha = p = \frac{k-1}{k}$ and let $\mathcal{I} \subseteq 2^{[k]}$ be a full antichain where
  $\mathcal{I} \neq \binom{[k]}{k-1}$.  For every $\mu > 0$, there exists $n_0$ such that for all $n
  \geq n_0$ there exists an $n$-vertex $k$-graph $H$ which
  \begin{itemize}
    \item satisfies \discIk,
    \item fails \cdk,
    \item for every $x \in V(H)$ the link $L_H(x)$ satisfies \cdkmone,
    \item has no copy of $K_r$ (so no perfect $K_r$-packing).
  \end{itemize}
\end{prop}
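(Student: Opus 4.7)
The plan is to construct, for each full antichain $\mathcal{I} \neq \binom{[k]}{k-1}$, an $n$-vertex $k$-graph $H$ with the four listed properties, taking $r = 2k-1$. First, I would locate an $I^* \in \binom{[k]}{k-1}$ not contained in any element of $\mathcal{I}$. Such an $I^*$ exists because either every $I \in \mathcal{I}$ has size $k-1$, so $\mathcal{I} \subsetneq \binom{[k]}{k-1}$ and $I^*$ can be any missing $(k-1)$-set; or some $I \in \mathcal{I}$ has $|I| \leq k-2$, in which case any $(k-1)$-set containing $I$ is forbidden from $\mathcal{I}$ by the antichain property. Relabel so $I^* = \{1,\dots,k-1\}$.

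The base of the construction rests on the Erd\H{o}s--Ginzburg--Ziv (EGZ) theorem. Let $c : V \to \mathbb{Z}/k$ be a balanced coloring and set $H_0 = \{S \in \binom{V}{k} : \sum_{v \in S} c(v) \not\equiv 0 \pmod k\}$, of density $\tfrac{k-1}{k}$. By EGZ, every $(2k-1)$-subset of $V$ contains a $k$-subset with zero color sum (a non-edge of $H_0$), so $H_0$ is $K_{2k-1}$-free. To witness failure of \cdk, take the $\binom{[k]}{k-1}$-layout $\Lambda$ in which $\lambda_{[k]\setminus\{i\}}$ is the set of $(k-1)$-subsets with color sum $s_i$, for values $s_1,\dots,s_k \in \mathbb{Z}/k$ chosen with $\sum_i s_i \equiv 0 \pmod k$: a short calculation shows $K_k(\Lambda)$ consists of ordered $k$-tuples whose coordinate-wise colors are fully determined by $s_1,\dots,s_k$ and whose total color sum is $0$, so $K_k(\Lambda) \cap H_0 = \emptyset$ while $|K_k(\Lambda)| \sim (n/k)^k$, giving discrepancy $\gg \mu n^k$. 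For \discIk, since no $I \in \mathcal{I}$ contains $I^*$, in any $\mathcal{I}$-layout $\Lambda'$ at least one vertex position has its color unpinned; a character-sum computation over $\mathbb{Z}/k$ then shows the color sum is equidistributed across $K_k(\Lambda')$, so a $\tfrac{k-1}{k}$ fraction of $K_k(\Lambda')$ lies in $H_0$.

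The main obstacle is ensuring \cdkmone for every link $L_H(x)$. The link of a vertex $x$ with $c(x)=a$ in the base construction consists of $(k-1)$-subsets with color sum $\not\equiv -a \pmod k$, whose coset structure is detected by a $\binom{[k-1]}{k-2}$-layout whose $\lambda_J$'s select individual color classes, so $L_{H_0}(x)$ blatantly fails \cdkmone. To repair this I would modify $H_0$ by a carefully designed random perturbation that smooths out the coset structure in every link while preserving a sufficient EGZ-style non-edge covering of every $r$-vertex set (possibly allowing $r$ to be a larger constant depending only on $k$, employing several overlapping EGZ colorings, or inserting controlled edge-level randomness that still forbids zero-sum $k$-subsets in designated positions). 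The technical heart is then to prove, via concentration inequalities applied uniformly over all $n$ vertices and all $(k-2)$-layouts, that this perturbed $H$ simultaneously retains \discIk, fails \cdk, avoids $K_r$, and has every link satisfying \cdkmone.
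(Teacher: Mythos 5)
Your base construction $H_0$ built from a vertex coloring $c : V \to \mathbb{Z}/k$ has a fatal flaw that you partially recognize but underestimate: it fails \discIk for \emph{every} full antichain $\mathcal{I}$, not just the ones you want. The reason your ``$I^*$ unpinned'' argument breaks down is that $\mathcal{I}$ being a full antichain means every position $i \in [k]$ belongs to some $I \in \mathcal{I}$, so every vertex position can be pinned: for each $I \in \mathcal{I}$ take $\lambda_I$ to be the set of $|I|$-sets consisting entirely of vertices colored $0$. Then every $k$-clique of this layout has all vertices colored $0$, hence color sum $\equiv 0$, hence is a non-edge, while $|K_k(\Lambda')| \approx (n/k)^k$. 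The existence of $I^*$ not contained in any element of $\mathcal{I}$ controls which \emph{$(k-1)$-subsets} of an edge can be simultaneously constrained by the layout, but in a vertex-coloring model the constraints act on individual vertex colors, so the unpinned-position idea simply does not apply. The same rigidity is what makes your links badly structured: both symptoms have the same cause, namely that a vertex coloring has far too little entropy.

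The paper resolves both problems at once by coloring \emph{$(k-1)$-sets} rather than vertices: $f : \binom{V}{k-1} \to \{0,\dots,k-1\}$, with $E$ an edge iff $\sum_{T \subset E, |T|=k-1} f(T) \not\equiv 0 \pmod k$. This is not a ``perturbation'' of your $H_0$ — it is a different construction with vastly more independent randomness. Now the $\mathcal{I}$-layout constraints can fix the colors of at most the $(k-1)$-subsets indexed by members of $\mathcal{I}$, and the subset indexed by the missing $I^*$ stays free, so each candidate edge is an edge with probability exactly $(k-1)/k$ regardless of the layout; this is what drives the Towsner-style counting argument for \discIk. More importantly, for a fixed vertex $x$, membership of a $(k-1)$-set $S$ in $L(x)$ is decided by $f(S)$ alone (after conditioning on the colors of the $(k-2)$-subsets of $S \cup \{x\}$), so $L(x)$ is distributed exactly as a binomial random $(k-1)$-graph and satisfies \cdkmone with high probability — the ``smoothing'' you hope for falls out automatically. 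Failure of \cdk and $K_r$-freeness still work (take the all-zero-$Z$ layout, and replace EGZ with the $k$-color $(k-1)$-uniform Ramsey number $r_{k-1}(K_k^{(k-1)},\dots,K_k^{(k-1)})$, since EGZ no longer applies to set colorings). Your use of EGZ would give a smaller $r$, but only for a construction that does not satisfy the first and third bullets; as written, the heart of the proof is missing.
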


\begin{prop} \label{prop:constr-fail-links}
  For every $k \geq 3$ there exists an $r$ (depending only on $k$) such that the following holds.
  Let $\alpha = p = \frac{k-1}{k}$ and let $\mathcal{J} \subseteq 2^{[k-1]}$ be a full antichain
  where $\mathcal{J} \neq \binom{[k-1]}{k-2}$.  For every $0 < \mu, p < 1$, there exists $n_0$ such
  that for all $n \geq n_0$ with $r|n$, there exists an $n$-vertex $k$-graph $H$ which
  \begin{itemize}
    \item satisfies \cdk,
    \item for every $x \in V(H)$ the link $L_H(x)$ satisfies \disca{\mathcal{J}},
    \item there exists $x \in V(H)$ such that the link $L_H(x)$ fails \cdkmone,
    \item has no perfect $K_r$-packing.
  \end{itemize}
\end{prop}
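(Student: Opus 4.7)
The plan is to construct $H$ in two stages: build a base $k$-graph $H_0$ of density $(k-1)/k$ that satisfies \cdk, whose every vertex link satisfies \cdkmone, and which admits no perfect $K_r$-packing; then alter $H_0$ around a single vertex $x_0$ by replacing its link with a tailored $(k-1)$-graph $L^*$. For the base $H_0$, I would adapt a Tur\'an-type construction: partition $V = V_0 \cup V_1 \cup \cdots$ and define edges by a combinatorial rule so that \cdk holds by a direct counting argument mirroring those in \cite{pp-lenz14}, the links are automatically codegree-quasirandom, and a divisibility obstruction blocks perfect $K_r$-packings for some $r$ depending only on $k$.

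The combinatorial observation driving the construction of $L^*$ is that $\mathcal{J}$ has a blind spot in $\binom{[k-1]}{k-2}$: there exists $S^* \in \binom{[k-1]}{k-2}$ with $S^* \not\subseteq J$ for every $J \in \mathcal{J}$. Indeed, every $J \in \mathcal{J}$ has $|J| \leq k-2$ (if $|J| = k-1$ then $J = [k-1]$ and the antichain property forces $|\mathcal{J}| = 1$, contradicting $|\mathcal{J}| \geq 2$), and for such $J$ the inclusion $S \subseteq J$ with $|S| = k-2$ forces $S = J$; if every $S \in \binom{[k-1]}{k-2}$ were covered in this way, $\mathcal{J}$ would contain all of $\binom{[k-1]}{k-2}$, and the antichain property would force equality, contradicting the hypothesis. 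After relabeling, $S^* = \{1, \ldots, k-2\}$ and every $J \in \mathcal{J}$ omits some index $i_J \in S^*$.

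I then design $L^*$ on $V(H_0) \setminus \{x_0\}$ so that a $\binom{[k-1]}{k-2}$-layout supported on $S^*$ detects a deficit but no $\mathcal{J}$-layout can. Concretely, starting from a random-like $(k-1)$-graph of density slightly above $\alpha$, I impose a structural deficit tied to $S^*$, designed so that the $\binom{[k-1]}{k-2}$-layout $\lambda_{S^*} = D$ (for a carefully chosen auxiliary pseudo-random $(k-2)$-graph $D$ on a subset $A$) together with trivial $\lambda_S$ for $S \neq S^*$ witnesses a density drop exceeding $\mu n^{k-1}$. The missing-coordinate property of $\mathcal{J}$ --- each $J$ omits some index in $S^*$ --- prevents any $\mathcal{J}$-layout from constraining all $(k-2)$ coordinates of $S^*$ together; averaging over the unconstrained coordinates, the planted deficit contributes at most $\mu n^{k-1}$ to any $\mathcal{J}$-test, so $|L^* \cap K_{k-1}(\Lambda')| \geq \alpha |K_{k-1}(\Lambda')| - \mu n^{k-1}$ holds uniformly over $\mathcal{J}$-layouts $\Lambda'$.

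Once $L^*$ is in hand, splice: define $H := (H_0 \setminus \{e : x_0 \in e\}) \cup \{\{x_0\} \cup \sigma : \sigma \in L^*\}$. This changes only $O(n^{k-1}) = o(n^k)$ edges, so \cdk persists. The links of $x \neq x_0$ are altered by only $O(n^{k-2})$ edges, preserving their codegree quasirandomness, and $L_H(x_0) = L^*$ by construction. The partition-based obstruction to a perfect $K_r$-packing in $H_0$ is robust against a perturbation at a single vertex, so $H$ retains the lack of a packing. The main technical obstacle is the simultaneous design of $L^*$ and $D$: the defect must be large enough at $S^*$ to break \cdkmone by a margin exceeding $\mu n^{k-1}$, but structured enough that the missing coordinate in every $J \in \mathcal{J}$ dissipates the defect uniformly in every $\mathcal{J}$-test. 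Calibrating this --- likely via the pseudo-randomness of $D$ combined with a concentration argument uniform over $\mathcal{J}$-layouts --- is where the principal work of the proof resides.
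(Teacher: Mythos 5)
Your high-level template---start from a base $k$-graph $H_0$ satisfying \cdk with good links, then splice a specially designed link $L^*$ into a single vertex $x_0$---matches the paper's strategy, and your ``blind spot'' observation (that $\mathcal{J}\neq\binom{[k-1]}{k-2}$ being a full antichain forces some $S^*\in\binom{[k-1]}{k-2}$ to lie in no member of $\mathcal{J}$) is exactly the combinatorial lever the paper pulls in Lemma~\ref{lem:constr-sat-count} (stated there as: pick $I\in\binom{[k]}{k-1}\setminus\mathcal{I}$). However, your proposal has a genuine gap in the choice of $H_0$ and in where the packing obstruction lives.

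You propose a ``Tur\'an-type'' partition-based $H_0$ with a divisibility obstruction, and you simultaneously want $H_0$ to satisfy \cdk. These two requirements are in direct conflict. \cdk is the strongest discrepancy condition in this paper's hierarchy: it must hold against \emph{every} $\binom{[k]}{k-1}$-layout. A partition-based rule always supplies a layout that exposes it; for instance, if $V=V_0\cup V_1$ and edges are determined by $|e\cap V_1|\bmod m$, taking each $\lambda_S$ to be the $(k-1)$-sets inside $V_0$ concentrates $K_k(\Lambda)$ on a region where the edge density is $0$ or $1$, a discrepancy of order $n^k$. Even the paper's much more delicate random-coloring construction $A^{(k)}_n$ provably \emph{fails} \cdk (Lemma~\ref{lem:constr-fails-cd}); that is precisely why it is used at the link level and not as the base. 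Relatedly, the claim that ``the partition-based obstruction is robust against a perturbation at a single vertex'' is unsupported: replacing $x_0$'s link by a quasirandom $(k-1)$-graph $L^*$ typically floods $x_0$ with copies of $K_r$, and the divisibility argument no longer forbids a packing a priori.

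The paper sidesteps both problems cleanly. It takes $H_0=G^{(k)}(n,p)$, a random $k$-graph---which trivially satisfies \cdk and whose links are fully quasirandom, so there is no tension to manage---and it plants the obstruction entirely in the modified link by setting $L_H(x_0)=A^{(k-1)}_n$. By the Ramsey argument (Lemma~\ref{lem:constr-has-no-clique} one level down), $A^{(k-1)}_n$ contains no $K^{(k-1)}_r$ for $r=r_{k-2}(K^{(k-2)}_{k-1},\dots,K^{(k-2)}_{k-1})$, so $x_0$ lies in no $K^{(k)}_{r+1}$, and a perfect $K^{(k)}_{r+1}$-packing is impossible because no $r+1$-clique can cover $x_0$. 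This makes the no-packing property local, robust, and independent of the global structure of $H_0$. The remaining properties follow from the analysis of $A^{(k-1)}_n$: it fails \cdkmone (Lemma~\ref{lem:constr-fails-cd}) but satisfies \disca{\mathcal{J}} for every $\mathcal{J}\neq\binom{[k-1]}{k-2}$ (Lemma~\ref{lem:constr-sat-count} plus Towsner's theorem), which is the rigorous version of the ``planted deficit at $S^*$ invisible to $\mathcal{J}$-layouts'' you describe informally. To repair your proposal, drop the Tur\'an base and the divisibility obstruction; use a random (or any \cdk-satisfying) base, make $L^*$ an explicit construction with no $K^{(k-1)}_r$ (a Ramsey coloring suffices), and verify the two discrepancy properties of $L^*$ directly.
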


The remainder of this paper is organized as follows.  In Sections~\ref{sec:tools}
and~\ref{sec:embedding} we discuss the two main tools needed for the proof of Theorem~\ref{thm:cdpack},
in Section~\ref{sec:packing} we prove Theorem~\ref{thm:cdpack}, and finally in
Section~\ref{sec:Constructions} we explain the constructions which prove
Propositions~\ref{prop:constr-fail-kminusone} and~\ref{prop:constr-fail-links}.

\section{Absorbing Sets} 
\label{sec:tools}

One of the main tools for our proof of Theorem~\ref{thm:cdpack} is the absorbing technique of
R\"odl-Ruci\'nski-Szemer\'edi~\cite{pp-rodl09}.  We will use the following absorbing lemma
from~\cite{pp-lenz14} without modification.

\begin{definition}
  Let $F$ and $H$ be $k$-graphs and let $A, B \subseteq V(H)$.  We say that $A$
  \emph{$F$-absorbs} $B$ or that $A$ is an \emph{$F$-absorbing set} for $B$ if both $H[A]$ and $H[A
  \cup B]$ have perfect $F$-packings.  When $F$ is a single edge, we say that $A$
  \emph{edge-absorbs} $B$.
\end{definition}

\begin{definition}
  Let $F$ and $H$ be $k$-graphs, $\epsilon > 0$, and $a$ and $b$ be multiples of $v(F)$.  We say
  that $H$ is \emph{$(a,b,\epsilon,F)$-rich} if for all $B \in \binom{V(H)}{b}$ there are at least
  $\epsilon n^a$ sets in $\binom{V(H)}{a}$ which $F$-absorb $B$.
\end{definition}

\begin{lemma} (Absorbing Lemma, specialized version of \cite[Lemma 10]{pp-lenz14}) \label{lem:absorbing}
  Let $F$ be a $k$-graph, $\epsilon > 0$, and $a$ and $b$ be multiples of $v(F)$.  There exists an
  $n_0$ and $\omega > 0$ such that for all $n$-vertex $k$-graphs $H$ with $n \geq n_0$, the
  following holds.  If $H$ is $(a,b,\epsilon,F)$-rich, then there exists an $A \subseteq V(H)$ such
  that $a | |A|$ and $A$ $F$-absorbs all sets $C$ satisfying the following conditions: $C \subseteq
  V(H) - A$, $|C| \leq \omega n$, and $b | |C|$.
\end{lemma}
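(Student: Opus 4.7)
The plan is to follow the Rödl--Ruciński--Szemerédi probabilistic absorbing framework. The idea is to sample a random family of candidate absorbing $a$-sets, prune it so its members are vertex-disjoint while retaining many absorbers for every $b$-set, and then take $A$ to be the union.

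First, form a random subfamily $\mathcal{F} \subseteq \binom{V(H)}{a}$ by including each $a$-subset independently with probability $q := c\, n^{1-a}$, where $c = c(\epsilon,a,b,v(F)) > 0$ is a small constant chosen below. Then $\mathbb{E}|\mathcal{F}| = \Theta(cn)$, and for each $B \in \binom{V(H)}{b}$, the expected number of elements of $\mathcal{F}$ that $F$-absorb $B$ is at least $q \cdot \epsilon n^a = \Theta(c\epsilon n)$. Applying Chernoff bounds to $|\mathcal{F}|$ and (after fixing $B$) to the number of absorbers of $B$ in $\mathcal{F}$, together with a union bound over the $\binom{n}{b}$ choices of $B$, shows that with probability tending to $1$ the family $\mathcal{F}$ satisfies simultaneously (i) $|\mathcal{F}| \leq 2q\binom{n}{a}$ and (ii) every $B \in \binom{V(H)}{b}$ has at least $\tfrac{1}{2}q\epsilon n^a = \Theta(c\epsilon n)$ absorbers in $\mathcal{F}$. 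At the same time, the expected number of pairs $\{S,S'\} \subseteq \mathcal{F}$ with $S \cap S' \neq \emptyset$ is $O(q^2 n^{2a-1}) = O(c^2 n)$, so by Markov we may assume this quantity is also $O(c^2 n)$. Choosing $c$ sufficiently small compared with $\epsilon$ makes the number of intersecting pairs much smaller than the number of guaranteed absorbers per $b$-set.

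Delete one $a$-set from each intersecting pair to obtain a subfamily $\mathcal{F}'$ of pairwise disjoint $a$-sets. By the previous bounds, every $B \in \binom{V(H)}{b}$ still has at least $\tfrac{1}{4}q\epsilon n^a = \Theta(c\epsilon n)$ absorbers remaining in $\mathcal{F}'$. Let
\[
A := \bigcup_{S \in \mathcal{F}'} S.
\]
Since the sets in $\mathcal{F}'$ are disjoint, $|A| = a |\mathcal{F}'|$, so $a \mid |A|$. Moreover, each $S \in \mathcal{F}'$ is itself an $F$-absorbing set, which in particular requires that $H[S]$ admit a perfect $F$-packing; concatenating these perfect $F$-packings shows that $H[A]$ has a perfect $F$-packing.

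It remains to verify the absorbing property. Fix any $C \subseteq V(H) \setminus A$ with $b \mid |C|$ and $|C| \leq \omega n$, where $\omega > 0$ is chosen so that $\omega/b$ is smaller than the number of absorbers per $b$-set divided by $|\mathcal{F}'|$ (a positive constant times $c\epsilon$). Partition $C = C_1 \sqcup \dots \sqcup C_t$ into blocks of size $b$, with $t \leq \omega n/b$. Build absorbers greedily: having assigned distinct $S_1,\dots,S_{i-1} \in \mathcal{F}'$ to $C_1,\dots,C_{i-1}$, at least $\Theta(c\epsilon n) - (i-1) > 0$ members of $\mathcal{F}'$ still absorb $C_i$, so we may pick $S_i$. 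Combining the perfect $F$-packings of $H[S_i \cup C_i]$ for $i \leq t$ with the perfect $F$-packings of $H[S]$ for unused $S \in \mathcal{F}'$ gives a perfect $F$-packing of $H[A \cup C]$, using that $\mathcal{F}'$ is disjoint and $C$ avoids $A$. The main technical obstacle is balancing the three constants: $q$ must be large enough that every $b$-set retains many absorbers after pruning, yet small enough that $|\mathcal{F}'|$ is tiny compared with the per-$b$-set absorber count, so that the greedy step never exhausts the supply; this is precisely what a careful choice of $c$ in terms of $\epsilon,a,b,v(F)$ accomplishes.
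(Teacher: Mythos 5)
The paper imports this lemma verbatim from \cite[Lemma 10]{pp-lenz14} and gives no proof of its own, so I can only evaluate your argument on its own terms. You follow the standard R\"odl--Ruci\'nski--Szemer\'edi random-sampling-and-pruning construction, which is indeed the right framework.

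There is, however, a genuine gap. You sample $\mathcal{F}$ uniformly from \emph{all} of $\binom{V(H)}{a}$, prune to a pairwise-disjoint subfamily $\mathcal{F}'$, and then assert that ``each $S \in \mathcal{F}'$ is itself an $F$-absorbing set, which in particular requires that $H[S]$ admit a perfect $F$-packing.'' This is false as stated: a random $a$-set need not absorb any $b$-set, so $H[S]$ may well have no perfect $F$-packing. In that case $H[A]$ would have no perfect $F$-packing, and the ``unused'' sets $S\in\mathcal{F}'$ invoked in your final paragraph could not be covered either, so both halves of the conclusion fail. The fix is standard but must be stated explicitly: after obtaining the disjoint family $\mathcal{F}'$, delete every $S\in\mathcal{F}'$ for which $H[S]$ has no perfect $F$-packing. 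Since, by definition, any $a$-set that $F$-absorbs some $B$ does have a perfect $F$-packing on $H[S]$, this deletion removes no absorbers; the per-$B$ counts are unchanged, and now every member of the family is individually packable. With that extra pruning step your construction of $A$, the divisibility claim, and the greedy absorption all go through. (A minor secondary point: your normalization for $\omega$ is off --- you need $t \le \omega n/b$ to be smaller than the surviving per-$B$ absorber count $\Theta(c\epsilon n)$, i.e.\ $\omega < \Theta(bc\epsilon)$, so $\omega$ must depend on $c$, which is fine since $c = c(\epsilon,a,b,v(F))$ --- but dividing by $|\mathcal{F}'|$ as you wrote gives the wrong threshold.)
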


\section{Embedding Lemma} 
\label{sec:embedding}

\begin{definition}
  Let $k \geq 2$ and $0 \leq m \leq f$.  Let $F$ and $H$ be $k$-graphs with $V(F) =
  \{w_1,\dots,w_f\}$.  A \emph{labeled copy of $F$ in $H$} is an edge-preserving injection from
  $V(F)$ to $V(H)$.  A \emph{degenerate labeled copy of $F$ in $H$} is an edge-preserving map from
  $V(F)$ to $V(H)$ that is not an injection.  Let $1 \leq m \leq f$ and let $Z_1, \dots, Z_m
  \subseteq V(H)$.  Set $\inj[F \rightarrow H; w_1 \rightarrow Z_1, \dots, w_m \rightarrow Z_m]$ to
  be the number of edge-preserving injections $\psi : V(F) \rightarrow V(H)$ such that $\psi(w_i)
  \in Z_i$ for all $1 \leq i \leq m$.  If $Z_i = \{z_i\}$, we abbreviate $w_i \rightarrow \{z_i\}$
  as $w_i \rightarrow z_i$.
\end{definition}

The embedding lemma (Lemma~\ref{lem:embeddingatvertex}) proved in this section shows that if $H$
satisfies \discIk and \discImone in the links, then $H$ contains many copies of $F$ if $F$ is
$(\mathcal{I},\mathcal{J})$-adapted.  In fact, it says more: if $m$ of the vertices of $F$ are
pre-specified and $F$ satisfies the following more technical condition, then there are many copies
of $F$ using the $m$ pre-specified vertices.

\begin{definition}
  Let $k \geq 2$, $\mathcal{I} \subseteq 2^{[k]}$ and $\mathcal{J} \subseteq 2^{[k-1]}$ be
  antichains, $F$ a $k$-graph, and $s_1, \dots, s_m \in V(F)$.  We say that $F$ is
  \emph{$(\mathcal{I},\mathcal{J})$-adapted at $s_1,\dots,s_m$} if there exists an ordering $E_1,
  \dots, E_t$ of the edges of $F$ such that
  \begin{itemize}
    \item for every $i$, $|E_i \cap \{s_1,\dots,s_m\}| \leq 1$,

    \item for every $E_i$ with $E_i \cap \{s_1,\dots,s_m\} = \emptyset$, there exists a bijection
      $\phi_i : E_i \rightarrow [k]$ such that for all $j < i$, there exists $I \in \mathcal{I}$ with
      $\{\phi_i(x) : x \in E_j \cap E_i\} \subseteq I$,

    \item for every $E_i$ with $s_\ell \in E_i$, there exists a bijection $\psi_i : E_i
      \setminus\{s_\ell\} \rightarrow [k-1]$ such that for all $j < i$, there exists $J \in
      \mathcal{J}$ with $\{ \psi_i(x) : x \in E_j \cap E_i, x \neq s_\ell \} \subseteq J$.
  \end{itemize}
  Note that $m = 0$ is possible, in which case the definition is equivalent to
  $\mathcal{I}$-adapted.
\end{definition}

\begin{lemma}\label{lem:embeddingatvertex}
  Let $k \geq 2$, $0 < \alpha,\gamma,p < 1$, and $\mathcal{I} \subseteq 2^{[k]}$ and $\mathcal{J}
  \subseteq 2^{[k-1]}$ be antichains.  Let $F$ be an $f$-vertex $k$-graph with $V(F) = \{s_1, \dots,
  s_m, t_{m+1},\dots,t_f\}$.  Suppose that $F$ is $(\mathcal{I},\mathcal{J})$-adapted at
  $s_1,\dots,s_m$.  Then there exists an $n_0$ and $\mu > 0$ such that the following is true. 
  
  Let $H$ be an $n$-vertex $k$-graph with $n \geq n_0$, where $H$ satisfies \discIk.  If $m > 0$,
  then also assume that $L_H(x)$ satisfies \discImone for every vertex $x \in V(H)$.  Let $y_1,
  \dots, y_m \in V(H)$ be distinct and let $V_{m+1},\dots,V_{f} \subseteq V(H)$.  Then
  \begin{align*}
    \inj[F \rightarrow H;s_1 \rightarrow y_1, \dots, &s_m \rightarrow y_m, t_{m+1} \rightarrow
        V_{m+1},\dots,t_f \rightarrow V_f] \\
    &\geq 
    \alpha^{d_F(s_1)} \cdots \alpha^{d_F(s_m)} p^{|F|-\sum d_F(s_i)} |V_{m+1}| \cdots |V_f| - \gamma n^{f-m}.
  \end{align*}
\end{lemma}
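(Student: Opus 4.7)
The plan is to induct on the number of edges $t=|E(F)|$, processing them in the $(\mathcal{I},\mathcal{J})$-adapted order $E_1,\dots,E_t$ and peeling off the last one. The base case $t=0$ is immediate: with no edges, $\inj[F\to H;s_i\to y_i,t_j\to V_j]$ just counts injective choices of $\psi(t_j)\in V_j$ avoiding $y_1,\dots,y_m$, giving $|V_{m+1}|\cdots|V_f|-O(n^{f-m-1})$, which dominates the claimed bound for $n_0$ large. For the inductive step, set $F':=F-E_t$ (remove the edge, keep all vertices); the truncated ordering witnesses that $F'$ is $(\mathcal{I},\mathcal{J})$-adapted at $s_1,\dots,s_m$, so the inductive hypothesis gives a bound for $\inj[F'\to H;\ldots]$ with some error $\gamma' n^{f-m}$. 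The heart of the induction is then to prove
\[
\inj[F\to H;\ldots]\;\ge\;\beta\cdot\inj[F'\to H;\ldots]-\mu n^{f-m},
\]
where $\beta=p$ when $E_t\cap\{s_1,\dots,s_m\}=\emptyset$ (Case A) and $\beta=\alpha$ when $s_\ell\in E_t$ for some $\ell$ (Case B). Chaining this with the inductive hypothesis and choosing $\gamma',\mu$ so that $\beta\gamma'+\mu\le\gamma$ recovers the claim, since $|F'|=|F|-1$ and $d_{F'}(s_i)=d_F(s_i)-[s_i\in E_t]$ make the exponents of $p$ and $\alpha$ balance correctly.

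To establish the key inequality in Case A, I would fix a partial embedding $\psi_W\colon W':=V(F)\setminus E_t\to V(H)$ contributing to $\inj[F'\to H;\ldots]$ and bound the number of $F$-valid extensions to $E_t$. Such an extension is an ordered $k$-tuple $(z_1,\dots,z_k)$ (indexed by $\phi_t$) that is injective, satisfies $z_i\in V_{\phi_t^{-1}(i)}$, respects the cross-edge conditions $\psi_W(E_j\setminus E_t)\cup\{z_i:i\in\phi_t(E_j\cap E_t)\}\in H$ for each earlier $E_j$ with $E_j\cap E_t\neq\emptyset$, and finally has $\{z_1,\dots,z_k\}\in H$. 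The non-$E_t$ constraints cut out an $\mathcal{I}$-layout $\Lambda_{\psi_W}$ on $V(H)$ whose $k$-cliques are exactly the candidate tuples: for each $I\in\mathcal{I}$, let $\lambda_I^{\psi_W}$ be the intersection, over all cross-edges $E_j$ whose overlap labels $\phi_t(E_j\cap E_t)$ are assigned to $I$ by $(\mathcal{I},\mathcal{J})$-adaptedness, of the $|I|$-uniform hypergraph encoding the corresponding cross-edge constraint; applying \discIk gives $|H\cap K_k(\Lambda_{\psi_W})|\ge p|K_k(\Lambda_{\psi_W})|-\mu n^k$, and summing over the at most $n^{f-m-k}$ choices of $\psi_W$ yields the desired inequality with total error $\mu n^{f-m}$. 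Case B is analogous, but carried out inside the link $L_H(y_\ell)$: with $\psi(s_\ell)=y_\ell$ forced, the extensions become $(k-1)$-tuples indexed by $\psi_t$, the cross-edge labels with $s_\ell$ removed lie in some $J\in\mathcal{J}$ by the second bullet of $(\mathcal{I},\mathcal{J})$-adaptedness, and \discImone applied to $L_H(y_\ell)$ supplies the factor $\alpha$.

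The main obstacle is to make the layout construction rigorous, because each $\lambda_I$ is an unordered $|I|$-uniform hypergraph and so cannot by itself distinguish which $z_i$ sits at which coordinate of $I$. Two specific difficulties arise: (i) when the cross-edge label set $T_j:=\phi_t(E_j\cap E_t)$ is a proper subset of the assigned $I_j\in\mathcal{I}$, the $|T_j|$-uniform edge constraint from $E_j$ must be lifted to an $|I_j|$-uniform layout hypergraph without introducing spurious tuples; and (ii) the position-specific restrictions $z_i\in V_{\phi_t^{-1}(i)}$ also need to be built into the unordered $\lambda_I$. Both difficulties are naturally resolved by a standard partite reduction: replace $V(H)$ by $k$ disjoint copies indexed by $[k]$ so that the coordinate of a vertex is read off from its copy, verify that \discIk is preserved (with only a mild degradation of $\mu$) under this blowup, and then define each $\lambda_I^{\psi_W}$ unambiguously in the partite setting. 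Executing this reduction cleanly while tracking the various error terms is the most delicate part of the argument.
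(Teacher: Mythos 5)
Your proposal follows the paper's skeleton quite closely: induct on the number of edges, peel off the last edge $E_t$ in the $(\mathcal{I},\mathcal{J})$-adapted ordering, sum over partial embeddings of $V(F)\setminus E_t$, encode the cross-edge constraints as an $\mathcal{I}$-layout, and apply \discIk (or \discImone in the link) to each layout. You also correctly identify the central technical obstacle: a $|I|$-uniform hypergraph $\lambda_I$ is unordered, so it cannot record which clique-vertex occupies which coordinate of $E_t$, nor can it impose the position-specific restrictions $z_i\in V_{\phi_t^{-1}(i)}$.

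The place where your argument has a genuine gap is the proposed fix. You suggest replacing $V(H)$ by $k$ disjoint copies and claim that \discIk is preserved under this blowup with only mild degradation of $\mu$; that is false. Consider the partite blowup $H'$ on $V(H)\times[k]$ with $\{(v_1,1),\dots,(v_k,k)\}\in H'$ iff $\{v_1,\dots,v_k\}\in H$, and take $\Lambda'$ to be the complete layout on the blowup. Then $|K_k(\Lambda')|\approx (kn)^k=k^k n^k$ while $|H'\cap K_k(\Lambda')|=(k!)^2|H|\approx k!\,p\,n^k$, so the ratio degrades from $p$ to $\tfrac{k!}{k^k}\,p$, a constant-factor loss that no choice of $\mu'$ can absorb. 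This wrecks the target exponent $p^{|F|}$ in the count. The paper sidesteps the whole issue with a lighter device: first prove the lemma under the extra hypothesis that $V_{m+1},\dots,V_f$ are pairwise disjoint. Then, inside the original $H$, the hyperedges of $\lambda_{I,Q_*}$ are $|I|$-sets with one vertex in each $V_{m+i}$ for $i\in I$, and disjointness pins down the coordinate of each vertex uniquely, so \discIk can be applied to $H$ itself with no blowup. Finally, the general (non-disjoint) case is recovered by averaging
\[
\inj[F\to H;\ldots,t_j\to V_j]
=\frac{1}{(f-m)^{n-f+m}}\sum_{(P_{m+1},\dots,P_f)}\inj[F\to H;\ldots,t_j\to V_j\cap P_j]
\]
over all ordered partitions $(P_{m+1},\dots,P_f)$ of $V(H)$, since each valid labeled copy of $F$ is counted exactly $(f-m)^{n-f+m}$ times on the right. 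You should adopt this restrict-then-average device in place of the blowup; it supplies the disjointness you need without ever changing the host hypergraph.
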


\begin{proof} 
We first prove the lemma under the additional assumption that the sets $V_{m+1},\dots,V_f$ are
pairwise disjoint.  This is proved by induction on $|F|$.  If $|F| = 0$, then
\begin{align*}
  \inj[F \rightarrow H;s_1 \rightarrow y_1, \dots, s_m \rightarrow y_m, t_{m+1} \rightarrow
  V_{m+1},\dots,t_f \rightarrow V_f] &\geq \prod_{i=m+1}^{f} \left( |V_i| - f \right) \\
                                     &\geq \alpha^0 p^0 \prod_{i=m+1}^f |V_i| - \gamma n^{f-m}
\end{align*}
for large $n$.  So assume $F$ has at least one edge and let $E$ be the last edge in an ordering of the
edges of $F$ which witness that $F$ is $(\mathcal{I},\mathcal{J})$-adapted at $s_1,\dots,s_m$.
(Recall that if $m = 0$ then $(\mathcal{I},\mathcal{J})$-adapted at $s_1,\dots,s_m$ is equivalent to
$\mathcal{I}$-adapted.)

Let $F_*$ be the hypergraph formed by deleting all vertices of $E$ from $F$.  Let $F_-$ be the
hypergraph formed by removing the edge $E$ from $F$ but keeping the same vertex set.  Let $Q_*$ be
an injective edge-preserving map $Q_* : V(F_*) \rightarrow V(H)$ where $Q_*(s_i) = y_i$ for $1 \leq
i \leq m$ and $Q_*(t_j) \in V_j$ for $t_j \notin E$.  There are two cases.

\medskip \textbf{Case 1:} $E \cap \{s_1,\dots,s_m\} = \emptyset$.  Let $\phi : E \rightarrow [k]$ be
the bijection from the definition of $(\mathcal{I},\mathcal{J})$-adapted at $s_1,\dots,s_m$ and
assume the vertices of $F$ are labeled such that $E = \{t_{m+1},\dots,t_{m+k}\}$, where
$\phi(t_{m+i}) = i$. For each $I \in \mathcal{I}$, define an $|I|$-uniform hypergraph
$\lambda_{I,Q_*}$ with vertex set $V(H)$ as follows.  Let $I = \{i_1,\dots,i_{|I|}\}$.  Make
$\{z_{i_1},\dots,z_{i_{|I|}}\} \in \binom{V(H)}{|I|}$ a hyperedge of $\lambda_{I,Q_*}$ if $z_{i_j}
\in V_{m + i_j}$ for all $j$ and when the map $Q_*$ is extended to map $t_{i_j}$ to $z_{i_j}$ for
all $j$, this extended map is an edge-preserving map from $F_-[V(F_*) \cup
\{t_{i_1},\dots,t_{i_{|I|}}\}]$ to $H$.  More informally, $\lambda_{I,Q_*}$ consists of all
$|I|$-sets to which $Q_*$ can be extended to produce a copy of $F_*$ together with the vertices of
$E$ indexed by $I$.  Let $\Lambda_{Q_*} = (\lambda_{I,Q_*})_{I \in \mathcal{I}}$.

Now, if $(z_{m+1},\dots,z_{m+k})$ is a $k$-tuple in $K_k(\Lambda_{Q_*})$, then the map $Q_*$ can be
extended to map $t_j$ to $z_j$ for $m+1 \leq j \leq m+k$ to produce an edge-preserving map from
$F_-$ to $H$.  To see this, let $E'$ be an edge of $F_-$.  Since $E$ is the last edge in the
ordering, if $E' \cap E = \{t_{j_1},\dots,t_{j_r}\}$ then there exists some $I \in \mathcal{I}$ with
$\{j_1,\dots,j_r\} \subseteq I$ since $F$ is $\mathcal{I}$-adapted.  Since $(z_{m+1},\dots,z_{m+k})$
is a $k$-clique, $\{z_{m+i} : i \in I\} \in \lambda_{I,Q_*}$.  This implies that there is some
permutation $\eta$ of $I$ such that extending $Q_*$ to map $t_{m+i}$ to $z_{m+\eta(i)}$ produces an
edge-preserving map.  Since the $V_{m+i}$s are pairwise disjoint and $z_{m+i} \in V_{m+i}$ for all
$i \in I$, $\eta$ must be the identity permutation, i.e.\ extending the map $Q_*$ to map $t_{m+i}$
to $z_{m+i}$ for all $i \in I$ produces an edge-preserving map.  Thus extending the map $Q_*$ to map
$t_{j_p}$ to $z_{j_p}$ for all $p$ is an edge-preserving map and $E'$ is one of the preserved edges.
Finally, since the $V_j$s are disjoint, each $k$-tuple in $K_k(\Lambda_{Q_*})$ corresponds to
exactly one labeled copy of $F_-$ in $H$ which extend $Q_*$ with $t_j$ mapped into $V_j$ for all
$j$.  Similarly, $|H \cap K_k(\Lambda_{Q_*})|$ is exactly the number of labeled copies of $F$ in $H$
which extend $Q_*$ with $t_j$ mapped into $V_j$ for all $j$.  Thus,
\begin{align}
  \inj[F \rightarrow H; s_1\rightarrow y_1,\dots,s_m\rightarrow y_m, t_{m+1}\rightarrow V_{m+1},
  \dots, t_f\rightarrow V_f] &= \sum_{Q_*} |H \cap K_k(\Lambda_{Q_*})| \nonumber \\
  \inj[F_- \rightarrow H; s_1\rightarrow y_1,\dots,s_m\rightarrow y_m, t_{m+1}\rightarrow V_{m+1},
  \dots, t_f\rightarrow V_f] &= \sum_{Q_*} |K_k(\Lambda_{Q_*})|. \label{eq:countFminus}
\end{align}
Since $H$ satisfies \discIk,
\begin{align}
  \inj[F \rightarrow H;s_1\rightarrow y_1,\dots,s_m\rightarrow y_m, &t_{m+1}\rightarrow V_{m+1},
  \dots, t_f\rightarrow V_f] \nonumber \\
  &\geq \sum_{Q_*} \left( p |K_k(\Lambda_{Q_*})| - \mu
  n^{k}\right) \nonumber \\
  &\geq p\sum_{Q_*} |K_k(\Lambda_{Q_*})| - \mu n^{f-m}, \label{eq:countFbyQsum}
\end{align}
where the last inequality is because there are at most $n^{f-m-k}$ maps $Q_*$, since $F_*$ has $f-k$
vertices and $s_i \in V(F_*)$ must map to $y_i$.  Combining \eqref{eq:countFminus} and
\eqref{eq:countFbyQsum} and then applying induction,
\begin{align*}
  \inj[F \rightarrow H; &s_1\rightarrow y_1,\dots,s_m\rightarrow y_m, t_{m+1}\rightarrow V_{m+1},
  \dots, t_f\rightarrow V_f] \\
  &\geq p \inj[F_- \rightarrow H; s_1\rightarrow y_1,\dots,s_m\rightarrow y_m, t_{m+1}\rightarrow
  V_{m+1}, \dots, t_f\rightarrow V_f] - \mu
  n^{f-m} \\
  &\geq p \left( \alpha^{\sum d(s_i)} p^{|F|-1-\sum d(s_i)}|V_{m+1}|\cdots|V_f|  - \gamma n^{f-m}  \right) - \mu
  n^{f-m}.
\end{align*}
Let $\mu = (1-p)\gamma$ so that the proof of this case complete.

\medskip

\textbf{Case 2:} $s_\ell \in E$.  (Since $F$ is $(\mathcal{I},\mathcal {J})$-adapted at
$s_1,\dots,s_m$, at most one vertex $s_\ell$ can be in $E$.)  Let $\psi : E \setminus \{s_\ell\}
\rightarrow [k-1]$ be the bijection from the definition of $(\mathcal{I},\mathcal{J})$-adapted at
$s_1,\dots,s_m$ and assume the vertices of $E$ are labeled such that $E = \{s_\ell,
t_{m+1},\dots,t_{m+k-1}\}$ where $\psi(t_{m+j}) = j$.  This case is very similar to the previous
case, except we will use \discImone in the link of $y_\ell$.  For each $J \in \mathcal{J}$, define a
$|J|$-uniform hypergraph $\lambda_{J,Q_*}$ with vertex set $V(H)$ as follows.  Let $J =
\{j_1,\dots,j_{|J|}\}$.  Make $\{z_{j_1},\dots,z_{j_{|J|}}\}$ a hyperedge of $\lambda_{J,Q_*}$ if
$z_{j_r} \in V_{j_r}$ for all $r$ and extending the map $Q_*$ to map $s_\ell$ to $y_\ell$ and
mapping $t_{j_r}$ to $z_{j_r}$ for all $r$ produces an edge-preserving map.  Let $\Lambda_{Q_*} =
(\lambda_{J,Q_*})_{J \in \mathcal{J}}$. Similar to before, if $(z_{m+1},\dots,z_{m+k-1})$ is a
$(k-1)$-tuple in $K_{k-1}(\Lambda_{Q_*})$, then the map $Q_*$ can be extended to map $s_\ell$ to
$y_\ell$ and map $t_{i}$ to $z_i$ for $m+1 \leq i \leq m+k-1$ to produce an edge-preserving map from
$F_-$ to $H$.  Thus $|K_{k-1}(\Lambda_{Q_*})|$ is exactly the number of labeled copies of $F_-$ in
$H$ which extend $Q_*$.  Similarly, $|L_H(y_\ell) \cap K_{k-1}(\Lambda_{Q_*})|$ is exactly the
number of labeled copies of $F$ in $H$ which extend $Q_*$.

Now formulas similar to \eqref{eq:countFminus} and \eqref{eq:countFbyQsum} and the fact that
$L_H(y_\ell)$ satisfies \discImone completes this case.  This concludes the proof of the lemma if
the sets $V_{m+1},\dots,V_f$ are pairwise disjoint.

Now assume that the sets $V_{m+1},\dots,V_f$ are not necessarily pairwise disjoint.  Let
$\mathcal{P} = \{ (P_{m+1},\dots,P_f) : P_{m+1},\dots,P_f \text{ is a partition of } V(H) \}$ so
that $|\mathcal{P}| = (f-m)^{n}$.  Now
\begin{align*}
  \inj[F \rightarrow H; s_1\rightarrow y_1,\dots,s_m\rightarrow y_m, t_{m+1}\rightarrow &V_{m+1},
  \dots, t_f\rightarrow V_f] \\
  = \frac{1}{(f-m)^{n-f+m}} \sum_{(P_{m+1},\dots,P_f) \in \mathcal{P}} 
  \inj[F \rightarrow H; &s_1\rightarrow y_1,\dots,s_m\rightarrow y_m, \\ &t_{m+1}\rightarrow V_{m+1}
  \cap P_{m+1}, \dots, t_f\rightarrow V_f \cap P_f].
\end{align*}
Indeed, each labeled copy of $F$ of the right form will be counted exactly $(f-m)^{n-f+m}$ times by
the sum over all partitions, since the images of $t_{m+1},\dots,t_f$ must map into the cooresponding
part of the partition and all other vertices of $H$ can be distributed to any of the parts of the
partition.  Let $\delta = \alpha^{d_F(s_1)} \cdots \alpha^{d_F(s_m)} p^{|F| - \sum d_F(s_i)}$. Since
$V_{m+1} \cap P_{m+1}, \dots, V_f \cap P_f$ are pairwise disjoint,
\begin{align*}
  \inj[F \rightarrow H; &s_1\rightarrow y_1,\dots,s_m\rightarrow y_m, t_{m+1}\rightarrow V_{m+1},
  \dots, t_f\rightarrow V_f] \\
  &\geq \frac{1}{(f-m)^{n-f+m}} \sum_{(P_{m+1},\dots,P_f) \in \mathcal{P}} 
  \left( \delta |V_{m+1} \cap P_{m+1}| \cdots |V_f \cap P_f| - \gamma n^{f-m} \right) \\
  &= \delta |V_{m+1}| \cdots |V_f| - \frac{\gamma n^{f-m} |\mathcal{P}|}{(f-m)^{n-f+m}}
  \geq \delta |V_{m+1}| \cdots |V_f| - \gamma n^{f-m}.
\end{align*}
\end{proof} 

\section{Packing $(\mathcal{I},\mathcal{J})$-adapted hypergraphs} 
\label{sec:packing}

In this section we prove Theorem~\ref{thm:cdpack}.  The proof has several stages: we first prove
that the quasirandom conditions on $H$ imply that $H$ is rich, then we use Lemma~\ref{lem:absorbing}
to set aside a vertex set $A$ which can absorb all reasonably sized sets, next we use the embedding
lemma (Lemma~\ref{lem:embeddingatvertex}) to produce an almost perfect packing in $H - A$, and
finally we use the properties of $A$ to absorb the remaining vertices.

\subsection{Richness} 
\label{sub:Richness}

In this subsection, we prove that the conditions on $H$ in Theorem~\ref{thm:cdpack} imply that $H$ is
$(f^2-f, f, \epsilon, F)$-rich, where $f = v(F)$.

\begin{lemma} \label{lem:richness}
  Let $k \geq 2$, $\mathcal{I} \subseteq 2^{[k]}$ be a full antichain, and $\mathcal{J} \subseteq
  2^{[k-1]}$ an antichain.  Let $F$ be an $(\mathcal{I},\mathcal{J})$-adapted $k$-graph with $f$
  vertices. For every $0 < \alpha,p < 1$, there exists $\mu,\epsilon > 0$ and $n_0$ so that the
  following holds.  Let $H$ be an $n$-vertex $k$-graph where $n \geq n_0$. Also, assume that $H$
  satisfies \discIk and that $L_H(z)$ satisfies \discImone for every vertex $z \in V(H)$.  Then $H$
  is $(f^2 - f, f, \epsilon, F)$-rich.
\end{lemma}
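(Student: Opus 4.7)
The plan is, for each target set $B = \{x_1, \ldots, x_f\} \in \binom{V(H)}{f}$, to build an auxiliary $k$-graph $G_B$ on $f^2$ vertices whose injective homomorphisms into $H$ that fix $B$ pointwise yield $F$-absorbers for $B$, and then to count these homomorphisms using Lemma~\ref{lem:embeddingatvertex}. The construction is a grid of $F$-copies: introduce fresh vertices $\{v_{i,j} : 1 \leq i \leq f,\ 1 \leq j \leq f-1\}$, for each row $i \in [f]$ place a copy $R_i$ of $F$ on $\{x_i, v_{i,1}, \ldots, v_{i,f-1}\}$ with $x_i$ playing the role of the distinguished vertex in the $(\mathcal{I},\mathcal{J})$-adaptation of $F$, and for each column $j \in [f-1]$ place a copy $C_j$ of $F$ on $\{v_{1,j}, \ldots, v_{f,j}\}$ using the $\mathcal{I}$-adapted ordering of $F$. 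Let $G_B$ have vertex set $B \cup \{v_{i,j}\}$ and edge set the union of the edges of all $R_i$ and $C_j$. If $\varphi : V(G_B) \to V(H)$ is an injective homomorphism with $\varphi(x_i) = x_i$ for each $i$, then the $\varphi(R_i)$ partition $B \cup \varphi(\{v_{i,j}\})$ into copies of $F$ and the $\varphi(C_j)$ partition $\varphi(\{v_{i,j}\})$ into copies of $F$, so $\varphi(\{v_{i,j}\})$ is an $F$-absorber for $B$.

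The key step is to verify that $G_B$ is $(\mathcal{I},\mathcal{J})$-adapted at $x_1,\ldots,x_f$. Order the edges of $G_B$ by first listing all row edges row by row (each row internally in $F$'s own $(\mathcal{I},\mathcal{J})$-adapted order with special vertex $x_i$) and then all column edges column by column (each column internally in $F$'s $\mathcal{I}$-adapted order). Every row edge meets $B$ only at its own $x_i$ and every column edge is disjoint from $B$, so the ``$|E_i \cap B| \leq 1$'' clause holds. Within a single row, or a single column, the needed $J \in \mathcal{J}$ or $I \in \mathcal{I}$ is inherited directly from $F$'s own adaptation; edges in different rows, or in different columns, are vertex-disjoint and so the intersection condition is vacuous. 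The only new situation is a later column edge $E_c$ in column $j$ preceded by an earlier row edge $E_r$ in row $i$; then $E_r \cap E_c \subseteq \{v_{i,j}\}$, so one needs $\{\phi_{E_c}(v_{i,j})\}$ to lie in some $I \in \mathcal{I}$, and this holds because $\mathcal{I}$ is a \emph{full} antichain and therefore covers every singleton of $[k]$.

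With the adaptation in hand, apply Lemma~\ref{lem:embeddingatvertex} to $G_B$ with $s_i := x_i$ for $i \in [f]$, target sets $V_{f+1} = \cdots = V_{f^2} := V(H)$, and error parameter $\gamma := \tfrac12 \alpha^{\sum_i d_{G_B}(x_i)} p^{|G_B|-\sum_i d_{G_B}(x_i)} > 0$; choose the $\mu$ promised by Lemma~\ref{lem:richness} to be the $\mu$ output by the embedding lemma for these parameters. For $n$ large, the embedding lemma yields at least $\gamma n^{f^2-f}$ injective homomorphisms $G_B \hookrightarrow H$ with $x_i \mapsto x_i$. Each such homomorphism determines an absorbing vertex set in $\binom{V(H)}{f^2-f}$, and each absorbing set is obtained from at most $(f^2-f)!$ distinct homomorphisms (those coming from the different labelings of the set by the symbols $v_{i,j}$), giving at least $\epsilon n^{f^2-f}$ distinct $F$-absorbers of $B$ for $\epsilon := \gamma/(f^2-f)!$. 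Since the isomorphism type of $G_B$ does not depend on $B$, the constant $\epsilon$ depends only on $F,\alpha,p,\mathcal{I},\mathcal{J}$, and $H$ is $(f^2-f, f, \epsilon, F)$-rich.

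The main obstacle is the design of the grid template $G_B$: it must be engineered so that the only ``cross'' intersections between edges of different $F$-copies shrink to a single vertex, because this is exactly the regime in which the full-antichain hypothesis on $\mathcal{I}$ salvages the $(\mathcal{I},\mathcal{J})$-adapted condition. Once the template is in place, the counting is a routine invocation of the embedding lemma.
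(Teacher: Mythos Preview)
Your proof is correct and follows essentially the same approach as the paper: you build an $f\times f$ grid of $F$-copies in which one line of $F$-copies contains the target vertices of $B$ (playing the distinguished role from the $(\mathcal{I},\mathcal{J})$-adaptation of $F$) and the orthogonal line of $F$-copies avoids $B$, then verify $(\mathcal{I},\mathcal{J})$-adaptation at $B$ by listing the $B$-containing copies first and using fullness of $\mathcal{I}$ for the singleton cross-intersections, and finally invoke the embedding lemma. The only difference from the paper is that your ``rows'' and ``columns'' are interchanged relative to theirs, which is purely cosmetic.
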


\begin{proof} 
Let $a = f(f-1)$ and $b = f$.  Our task is to come up with an $\epsilon > 0$ such that for large $n$
and all $B \in \binom{V(H)}{b}$, there are at least $\epsilon n^a$ vertex sets of size $a$ which
$F$-absorb $B$; we will define $\epsilon$ and $\mu$ later.   Let $V(F) = \{w_0,\dots,w_{f-1}\}$,
where $w_0$ is the special vertex in the definition that $F$ is $(\mathcal{I},\mathcal{J})$-adapted.

Next, form the following $k$-graph $F'$.  Let
\begin{align*}
  V(F') = \{ x_{i,j} : 0 \leq i, j \leq f-1 \}.
\end{align*}
(We think of the vertices of $F'$ as arranged in a grid with $i$ as the row and $j$ as the column.)
Form the edges of $F'$ as follows: for each fixed $1 \leq i \leq f-1$, let
$\{x_{i,0},\dots,x_{i,f-1}\}$ induce a copy of $F$ where $x_{i,j}$ is mapped to $w_j$.  Similarly,
for each fixed $0 \leq j \leq f-1$, let $\{x_{0,j},\dots,x_{f-1,j}\}$ induce a copy of $F$ where
$x_{i,j}$ is mapped to $w_{i}$.  Note that we therefore have a copy of $F$ in each column and a copy
of $F$ in each row besides the zeroth row.

Now fix $B = \{b_0, \dots, b_{f-1}\} \subseteq V(H)$; we want to show that $B$ is $F$-absorbed by many
$a$-sets.  Note that any labeled copy of $F'$ in $H$ which maps $x_{0,0} \rightarrow b_0, \dots,
x_{0,f-1} \rightarrow b_{f-1}$ produces an $F$-absorbing set for $B$ as follows.  Let $Q : V(F')
\rightarrow V(H)$ be an edge-preserving injection where $Q(b_j) = x_{0,j}$ (so $Q$ is a labeled copy
of $F'$ in $H$ where the set $B$ is the zeroth row of $F'$).  Let $A = \{Q(x_{i,j}) : 1 \leq i \leq
f-1, 0 \leq j \leq f-1\}$ consist of all vertices in rows $1$ through $f-1$.  Then $A$ has a perfect
$F$-packing consisting of the copies of $F$ on the rows, and $A \cup B$ has a perfect $F$-packing
consisting of the copies of $F$ on the columns.  Therefore, $A$ $F$-absorbs $B$.

To complete the proof, we therefore just need to use Lemma~\ref{lem:embeddingatvertex} where $m = f$
and $s_1 = x_{0,0},\dots,s_f = x_{0,f-1}$ to show that there are many copies of $F'$ with $B$ as the
zeroth row.  To do so, we need to show that $F'$ is $(\mathcal{I},\mathcal{J})$-adapted at
$s_1,\dots,s_m$.  Indeed, consider the following ordering of edges of $F'$.  First, list the edges
of $F'$ in the first column, then the edges of $F'$ in the second column, and so on until the $k$th
column.  Next, list the edges of $F'$ in the first row, then the second row, and so on until the
$(k-1)$st row.  Within each row or column, list the edges in the ordering given in the definition of
$F$ being $(\mathcal{I},\mathcal{J})$-adapted.  For the bijections $\phi$ or $\psi$, use the same
bijection as in the definition of $F$ being $(\mathcal{I},\mathcal{J})$-adapted.  Now consider $E_i,
E_j \in F'$ in this ordering with $j < i$.  If $E_i$ and $E_j$ are from the same row or the same
column, then since $F$ is $(\mathcal{I},\mathcal{J})$-adapted the condition on $E_i \cap E_j$ is
satisfied.  If $E_i$ and $E_j$ are in different rows or columns, the size of their intersection is
at most one.  If $E_i \cap E_j = \emptyset$ then the condition is trivially satisfied.  If $E_i \cap
E_j = \{u\}$, then $E_i$ must be from a row since $i > j$. Then $E_i$ does not contain any
$s_1,\dots,s_m$, so we must show that there is some $I \in \mathcal{I}$ so that $\phi_i(u) \in
I$.  This is true because $\mathcal{I}$ is full.  Thus $F'$ is $(\mathcal{I},\mathcal{J})$-adapted
at $s_1,\dots,s_m$.

Now apply Lemma~\ref{lem:embeddingatvertex} to $F'$ with  $m = f$, $s_1 = x_{0,0},\dots,s_f = x_{0,f-1}$,
$V_{m+1} = \cdots = V_{f^2} = V(H) - B$, and  $\gamma = \frac{1}{2}\alpha^{\sum d(x_{0,j})}
p^{|F|-\sum d(x_{0,j})}$. Ensure that $n_0$ is large enough and $\mu$ is small enough apply
Lemma~\ref{lem:embeddingatvertex} to show that
\begin{align*}
  \inj[F' \rightarrow H; x_{0,0} \rightarrow b_0, \dots, x_{0,f-1} \rightarrow b_{f-1}] \geq \gamma
  \left(\frac{n}{2}\right)^{f^2-f} = \frac{\gamma}{{2}^{f^2-f}} n^a.
\end{align*}
Each labeled copy of $F'$ produces a labeled $F$-absorbing set for $B$, so there are at least
$\frac{\gamma}{a!2^{f^2-f}} n^a$ $F$-absorbing sets for $B$.  The proof is complete by letting $\epsilon =
\frac{\gamma}{a!2^{f^2-f}}$.
\end{proof} 

\subsection{Almost perfect packings} 
\label{sub:almost}

In this section we prove that the conditions in Theorem~\ref{thm:cdpack} imply that there exists a
perfect $F$-packing covering almost all the vertices of $H$.

\begin{lemma} \label{lem:greedypacking}
  Let $k \geq 2$ and $\mathcal{I} \subseteq 2^{[k]}$ be a full antichain.  Fix $0 < p < 1$ and an
  $\mathcal{I}$-adapted $k$-graph $F$ with $f$ vertices.  Fix an integer $b$ with $f|b$.  For any
  $0 < \omega < 1$, there exists $n_0$ and $\mu > 0$ such that the following holds.  Let $H$ be an
  $k$-graph satisfying \discIk with $n \geq n_0$ and $f|n$.  Then there exists $C \subseteq V(H)$
  such that $|C| \leq \omega n$, $b| |C|$, and $H[\bar{C}]$ has a perfect $F$-packing.
\end{lemma}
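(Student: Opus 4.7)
The plan is a greedy extraction. Starting from $H_0 = H$, I would repeatedly find and delete the vertex set of one copy of $F$, producing a chain $H_0 \supseteq H_1 \supseteq \cdots \supseteq H_{i^*}$, and then set $C := V(H_{i^*})$. The stopping index $i^*$ is chosen as the smallest positive integer with $i^* \geq (1-\omega)n/f$ and $n - i^* f \equiv 0 \pmod b$. Since $f \mid n$ and $f \mid b$, this congruence has a solution in every window of $b/f$ consecutive integers, so such an $i^*$ exists in $[(1-\omega)n/f,\; (1-\omega)n/f + b/f)$. Consequently $|C| = n - i^* f \in (\omega n - b,\; \omega n]$, which forces $|C| \leq \omega n$ and $b \mid |C|$ by construction, while for $n$ large all intermediate hypergraphs satisfy $n_j := |V(H_j)| \geq \omega n / 2$ for $0 \leq j \leq i^*$.

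Next I would verify that each $H_j$ inherits the discrepancy condition on its own vertex set. Given any $\mathcal{I}$-layout $\Lambda$ on $V(H_j)$, extend it to a layout on $V(H)$ by including no hyperedges touching $V(H) \setminus V(H_j)$; then $K_k(\Lambda)$ is unchanged and $H \cap K_k(\Lambda) = H_j \cap K_k(\Lambda)$. The discrepancy hypothesis on $H$ then gives
\begin{align*}
|H_j \cap K_k(\Lambda)| \;\geq\; p|K_k(\Lambda)| - \mu n^k \;\geq\; p|K_k(\Lambda)| - \mu (2/\omega)^k\, n_j^k,
\end{align*}
so $H_j$ satisfies \discIk relative to its own vertex set with parameter $\mu' := \mu (2/\omega)^k$.

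Each extraction step would then invoke Lemma~\ref{lem:embeddingatvertex} applied to $H_j$ with $m = 0$, $V_1 = \cdots = V_f = V(H_j)$, and $\gamma = \tfrac{1}{2} p^{|F|}$. Since $F$ is $\mathcal{I}$-adapted, the definition of $(\mathcal{I},\mathcal{J})$-adapted at the empty sequence of vertices coincides with $\mathcal{I}$-adapted, so no link hypothesis is required. The embedding lemma supplies a threshold pair $(\mu_0, n_0')$; I would choose $\mu$ small enough that $\mu (2/\omega)^k \leq \mu_0$, and $n_0$ large enough that $\omega n / 2 \geq n_0'$. The lemma then yields at least $\tfrac{1}{2} p^{|F|} n_j^f > 0$ labeled copies of $F$ inside $H_j$, so each greedy step succeeds, and after $i^*$ iterations the deleted copies form a perfect $F$-packing of $H[\bar{C}]$. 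The only real obstacle is bookkeeping: simultaneously satisfying $|C| \leq \omega n$ and $b \mid |C|$, handled via the $f \mid b$ trick above; the hypergraph-theoretic content reduces to a single application of the embedding lemma per step, with the discrepancy parameter uniformly controlled because $n_j$ never drops below $\omega n / 2$.
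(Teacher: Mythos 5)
Your proposal is correct, and it reaches the same conclusion by a closely related but genuinely distinct path. Both proofs are greedy: repeatedly extract a copy of $F$ (via Lemma~\ref{lem:embeddingatvertex} with $m=0$) until the leftover set $C$ is small, then fix up the divisibility $b\mid|C|$. The paper's version keeps the embedding lemma pinned to the fixed host $H$: at each stage it takes $V_1=\cdots=V_f=C$ (the current leftover set), chooses $\gamma=\tfrac12 p^{|F|}(\omega/2)^f$, and concludes that as long as $|C|\geq\omega n/2$ there is still a copy of $F$ inside $C$; the divisibility is then handled by \emph{adding back} $y$ copies from the packing into $C$, where $y\equiv-|C|/f\pmod b$. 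You instead pass at each stage to the shrinking induced subhypergraph $H_j$ and apply the embedding lemma to $H_j$ with $V_i=V(H_j)$. That requires the extra observation --- which you correctly supply --- that restricting an $\mathcal{I}$-layout to $V(H_j)$ leaves $K_k(\Lambda)$ unchanged because $\mathcal{I}$ is full, so $H_j$ satisfies \discIk with $\mu$ replaced by $\mu(2/\omega)^k$; the paper avoids this hereditary-discrepancy step entirely since it never looks at an induced subhypergraph. You also pre-select the stopping index $i^*$ so that $|C|\leq\omega n$ and $b\mid|C|$ hold by construction, which is a sound alternative to the paper's add-back trick. Net effect: your proof is correct but carries one extra transfer step; the paper's version is slightly leaner because it keeps the discrepancy hypothesis on $H$ throughout and only varies the target sets $V_i$.
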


\begin{proof} 
First, select $n_0$ large enough and $\mu$ small enough so that any vertex set $C$ of size
$\left\lceil \frac{\omega}{2} \right\rceil$ contains a copy of $F$.  To see this, let $\gamma =
\frac{1}{2}p^{|F|}(\frac{\omega}{2})^f$ and select $n_0$ and $\mu > 0$ according to
Lemma~\ref{lem:embeddingatvertex} with $m = 0$.  (Recall that if $m = 0$ then the condition
$(\mathcal{I},\mathcal{J})$-adapted on $F$ at $\emptyset$ just reduces to the statement that $F$ is
$\mathcal{I}$-adapted.) Now if $C \subseteq V(H)$ with $|C| \geq \frac{\omega }{2} n$, then let $V_1
=  \dots = V_f = C$ so that $|V_i| \geq \frac{\omega}{2}$ for all $i$.  Then
Lemma~\ref{lem:embeddingatvertex} implies there are at least $p^{|F|}\prod|V_i| - \gamma n^f \geq
p^{|F|} \left( \frac{\omega}{2} \right)^f n^f - \gamma n^f = \gamma n^f > 0$ copies of $F$ inside
$C$.

Now let $F_1, \dots, F_t$ be a greedily constructed $F$-packing.  That is, $F_1, \dots, F_t$ are
disjoint copies of $F$ and $C := V(H) - V(F_1) - \dots - V(F_t)$ has no copy
of $F$.  By the previous paragraph, $|C| \leq \frac{\omega}{2} n$.  Since $f|n$ and $H[\bar{C}]$ has
a perfect $F$-packing, $f| |C|$.  Thus we can let $y \equiv - \frac{|C|}{f} \pmod {b}$ with $0\leq y
< b$ and take $y$ of the copies of $F$ in the $F$-packing of $H[\bar{C}]$ and add their vertices
into $C$ so that $b| |C|$.
\end{proof} 

\subsection{Proof of Theorem~\ref{thm:cdpack}} 
\label{sub:cdproof}

\begin{proof}[Proof of Theorem~\ref{thm:cdpack}] 
First, apply Lemma~\ref{lem:richness} to produce $\epsilon > 0$.  Next, select $\omega > 0$
according to Lemma~\ref{lem:absorbing} and $\mu_1 > 0$ according to Lemma~\ref{lem:greedypacking}.
Also, make $n_0$ large enough so that both Lemma~\ref{lem:absorbing} and~\ref{lem:greedypacking} can
be applied.  Let $\mu = \mu_1 \omega^{k}$.  All the parameters have now been chosen.

By Lemmas~\ref{lem:absorbing} and~\ref{lem:richness}, there exists a set $A \subseteq V(H)$ such
that $A$ $F$-absorbs $C$ for all $C \subseteq V(H) \setminus A$ with $|C| \leq \omega n$ and $b \mid
|C|$.  If $|A| \geq (1-\omega)n$, then $A$ $F$-absorbs $V(H)\setminus A$ so that $H$ has a perfect
$F$-packing.  Thus $|A| \leq (1-\omega)n$.  Next, let $H' := H[\bar{A}]$ and notice that $H'$
satisfies \texttt{Disc}$^{(k)}(\mathcal{I},\mathsmaller{\mathsmaller{\geq}}p,\mu_1)$ since $v(H')
\geq \omega n$ and
\begin{align*}
  \mu n^k \leq \frac{\mu}{\omega^k} v(H')^k = \mu_1 v(H')^k.
\end{align*}
Therefore, by Lemma~\ref{lem:greedypacking}, there exists a vertex set $C \subseteq V(H') = V(H)
\setminus A$ such that $|C| \leq \omega n$, $|C|$ is a multiple of $b$, and $H'[\bar{C}]$ has a
perfect $F$-packing.  Now Lemma~\ref{lem:absorbing} implies that $A$ $F$-absorbs $C$.  The perfect
$F$-packing of $A \cup C$ and the perfect $F$-packing of $H'[\bar{C}]$ produces a perfect
$F$-packing of $H$.
\end{proof} 

\section{Constructions} 
\label{sec:Constructions}

In this section, we prove Propositions~\ref{prop:constr-fail-kminusone}
and~\ref{prop:constr-fail-links} using the following construction.

\begin{constr}
  Let $k \geq 2$.  Let $A_n^{(k)}$ be the following probability distribution over $n$-vertex
  $k$-graphs.  Let $f : \binom{V(A_n^{(k)})}{k-1} \rightarrow \{0,\dots,k-1\}$ be a random $k$-coloring of
  the $(k-1)$-sets.  Make $E \in \binom{V(A_n^{(k)})}{k}$ an edge of $A_n^{(k)}$ if
  \begin{align*}
    \sum_{\substack{T \subseteq E \\ |T| = k-1}} f(T) \neq 0 \pmod k.
  \end{align*}
\end{constr}

\begin{lemma}\label{lem:constredges}
  Let $p = \frac{k-1}{k}$ and $\epsilon > 0$.  Then with probability going to one as $n$ goes to
  infinity,
  \begin{align*}
    \left| |A_n^{(k)}| - p \binom{n}{k} \right| < \epsilon n^k.
  \end{align*}
\end{lemma}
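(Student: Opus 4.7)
The plan is to show concentration of $|A_n^{(k)}|$ by a second moment argument. Write $X_E = \mathbb{1}[E \in A_n^{(k)}]$ for each $E \in \binom{V(A_n^{(k)})}{k}$, so $|A_n^{(k)}| = \sum_E X_E$. The first step is to compute $\mathbb{E}[X_E]$. The $k$ colors $f(T)$ for $T \in \binom{E}{k-1}$ are independent uniform on $\mathbb{Z}_k$, so their sum modulo $k$ is also uniform on $\mathbb{Z}_k$ (condition on all but one; the last summand then has probability $1/k$ of hitting any prescribed value). Hence $\Pr[E \in A_n^{(k)}] = \Pr[\text{sum} \not\equiv 0] = \frac{k-1}{k} = p$, and $\mathbb{E}[|A_n^{(k)}|] = p\binom{n}{k}$.

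The core of the proof is to show that $\mathrm{Var}(|A_n^{(k)}|) = O(n^k)$ by verifying that $\mathrm{Cov}(X_E, X_{E'}) = 0$ whenever $E \neq E'$. There are two cases. If $|E \cap E'| \leq k-2$, then $E$ and $E'$ share no $(k-1)$-subsets, so $X_E$ and $X_{E'}$ are determined by disjoint collections of independent colors and are therefore independent. The interesting case is $|E \cap E'| = k-1$; here $T_0 := E \cap E'$ is the unique shared $(k-1)$-subset, while the remaining $k-1$ subsets of $E$ (those containing the vertex in $E \setminus E'$) are disjoint as sets from the remaining $k-1$ subsets of $E'$. Let $S_E$ and $S_{E'}$ denote the sums (mod $k$) of $f$ on these two disjoint families. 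Each is a sum of $k-1$ independent uniform $\mathbb{Z}_k$ variables, hence uniform on $\mathbb{Z}_k$, and $S_E$ is independent of $S_{E'}$ and of $f(T_0)$. Conditioning on $f(T_0) = a$, we have $X_E X_{E'} = 1$ iff $S_E \neq -a$ and $S_{E'} \neq -a$, which by independence has probability $((k-1)/k)^2 = p^2$. Averaging over $a$ gives $\mathbb{E}[X_E X_{E'}] = p^2 = \mathbb{E}[X_E]\mathbb{E}[X_{E'}]$, so the covariance vanishes.

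Combining, $\mathrm{Var}(|A_n^{(k)}|) = \sum_E \mathrm{Var}(X_E) = \binom{n}{k} p(1-p) \leq n^k$. Chebyshev's inequality then yields
\[
  \Pr\!\left[\,\Big||A_n^{(k)}| - p\tbinom{n}{k}\Big| \geq \epsilon n^k\right] \;\leq\; \frac{n^k}{\epsilon^2 n^{2k}} \;=\; \frac{1}{\epsilon^2 n^k} \;\longrightarrow\; 0,
\]
which is exactly the desired conclusion.

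The one spot that requires care, and which I expect is the main obstacle, is the covariance computation in the $|E \cap E'| = k-1$ case: one might naively expect a positive correlation from the shared color $f(T_0)$, and the cleanest way to see that the correlation actually vanishes is to exploit the uniformity of sums in $\mathbb{Z}_k$ as above. Everything else is routine linearity of expectation and Chebyshev.
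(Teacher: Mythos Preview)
Your proof is correct and follows the same second-moment approach the paper invokes; the paper's own argument is a two-line sketch (``each $k$-set is an edge with probability exactly $p$ \ldots\ a simple second moment argument then shows concentration''), and you have supplied the details. Your observation that the covariances vanish exactly in the $|E\cap E'|=k-1$ case is sharper than strictly needed---a crude $O(n^{2k-1})$ variance bound from counting dependent pairs already suffices---but it is correct and makes the computation cleaner.
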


\begin{proof} 
Each $k$-set is an edge with probability exactly $p$, so $\mathbb{E}[|A_n^{(k)}|] = p\binom{n}{k}$.
A simple second moment argument then shows that with high probability the number of
edges is concentrated around $p \binom{n}{k}$.
\end{proof} 

\begin{lemma} \label{lem:constr-fails-cd}
  There exists a $\mu_0$ such that for all $0 < \mu < \mu_0$, with probability going to one as $n$
  goes to infinity, $A^{(k)}_n$ fails \cdk.
\end{lemma}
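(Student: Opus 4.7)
The plan is to exhibit, with high probability, an explicit $\mathcal{I}$-layout $\Lambda$ (where $\mathcal{I} = \binom{[k]}{k-1}$) on which $A_n^{(k)}$ drastically undercounts edges relative to $p|K_k(\Lambda)|$, thus violating \cdk.

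The construction of $\Lambda$ is the natural one suggested by the random coloring $f$. For each $I \in \binom{[k]}{k-1}$, let $\lambda_I$ be the same $(k-1)$-uniform hypergraph, namely
\[
\lambda_I \;=\; \{\,T \in \tbinom{V(A_n^{(k)})}{k-1} : f(T) = 0\,\}.
\]
The key structural observation is then immediate: if $(x_1,\dots,x_k) \in K_k(\Lambda)$, then every $(k-1)$-subset of $\{x_1,\dots,x_k\}$ is colored $0$ by $f$, so $\sum_{T \subseteq E, |T|=k-1} f(T) = 0 \pmod k$, and hence $\{x_1,\dots,x_k\} \notin A_n^{(k)}$. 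Therefore $|A_n^{(k)} \cap K_k(\Lambda)| = 0$ deterministically.

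Next, I would control $|K_k(\Lambda)|$. A fixed ordered $k$-tuple of distinct vertices lies in $K_k(\Lambda)$ iff each of the $k$ subsets of size $k-1$ of its underlying set is colored $0$; these colors are independent, so this happens with probability $(1/k)^k$. Thus $\mathbb{E}|K_k(\Lambda)| = k!\binom{n}{k}/k^k = (1+o(1))n^k/k^k$. A standard second-moment computation (the covariance between two tuples is nonzero only when they share at least a $(k-1)$-subset, which is a lower-order contribution) gives $|K_k(\Lambda)| = (1+o(1))n^k/k^k$ with probability $1-o(1)$.

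Combining these two facts, on the high-probability event,
\[
p|K_k(\Lambda)| - |A_n^{(k)} \cap K_k(\Lambda)| \;=\; p|K_k(\Lambda)| \;\geq\; \frac{p}{2k^k}\, n^k \;=\; \frac{k-1}{2k^{k+1}}\, n^k.
\]
Hence setting $\mu_0 = (k-1)/(2k^{k+1})$, for any $0 < \mu < \mu_0$ the layout $\Lambda$ witnesses a violation of \cdk, and the lemma follows. The only mildly nontrivial step is the concentration of $|K_k(\Lambda)|$, but this is routine second moment; the main content of the argument is the arithmetic observation that the construction of $A_n^{(k)}$ explicitly forbids edges whose $(k-1)$-faces are monochromatic in color $0$, giving the zero count on the left-hand side.
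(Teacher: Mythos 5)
Your proposal is correct and is essentially identical to the paper's proof: the same layout $\Lambda$ consisting of the color-$0$ $(k-1)$-sets in every coordinate, the same observation that $|A_n^{(k)} \cap K_k(\Lambda)| = 0$ deterministically, and the same second-moment concentration of $|K_k(\Lambda)|$ around $k^{-k}n^k$. The only cosmetic difference is the choice of constant for $\mu_0$ (the paper uses a slightly more conservative $\frac{1}{20}\frac{k-1}{k^{k+1}}$).
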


\begin{proof} 
Let $Z$ be the $(k-1)$-graph whose edges are all the $(k-1)$-sets colored zero.  Let $\Lambda = (Z,
\dots, Z)$ be the $\binom{[k]}{k-1}$-layout consisting of $Z$ in every coordinate.  Now any
$k$-clique $(z_1,\dots,z_k)$ of $\Lambda$ is not a hyperedge of $A_n^{(k)}$, since every $(k-1)$-subset of
$\{z_1,\dots,z_k\}$ has color zero.  This $\Lambda$ will show that $A_n^{(k)}$ fails \cdk if
$|K_k(\Lambda)|$ is large enough.  Each $k$-tuple of vertices is a $k$-clique with probability
$(\frac{1}{k})^k$, so $\mathbb{E}[|K_k(\Lambda)|] = k^{-k} (n)_k$.  A
simple second moment computation shows that $|K_k(\Lambda)|$ is concentrated around its expectation,
so with high probability for large $n$ we have that $|K_k(\Lambda)| \geq \frac{1}{10} k^{-k} n^k$.
Thus if $\mu_0 = \frac{1}{20} \frac{k-1}{k^{k+1}}$, we have that
\begin{align*}
  0 = |H \cap K_k(\Lambda)| < \frac{k-1}{k} |K_k(\Lambda)| - \mu n^k.
\end{align*}
\end{proof} 

\begin{lemma} \label{lem:constr-has-no-clique}
  Let $r = r_{k-1}(K^{(k-1)}_k, \dots, K^{(k-1)}_k)$ be the $k$-color Ramsey number, where the
  $(k-1)$-sets are colored and a monochromatic $k$-clique is forced.  Then $A_{n}^{(k)}$ has no copy of
  $K^{(k)}_r$.
\end{lemma}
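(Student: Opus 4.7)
The plan is to show that every $r$-vertex subset $R \subseteq V(A_n^{(k)})$ must contain at least one non-edge, which rules out the existence of a copy of $K_r^{(k)}$. The whole argument rests on matching the Ramsey hypothesis to the modular definition of the edges.

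First I would restrict the random coloring $f$ to the $(k-1)$-subsets of $R$. This gives a $k$-coloring of $\binom{R}{k-1}$ with color palette $\{0,1,\dots,k-1\}$. By the definition of $r = r_{k-1}(K_k^{(k-1)},\dots,K_k^{(k-1)})$, any such coloring must contain a monochromatic $K_k^{(k-1)}$. Concretely, there is a $k$-element subset $S \subseteq R$ and a color $c \in \{0,1,\dots,k-1\}$ such that $f(T) = c$ for every $T \in \binom{S}{k-1}$.

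Next I would plug this $S$ into the edge rule of the construction. Since $S$ has exactly $\binom{k}{k-1} = k$ many $(k-1)$-subsets, all of which have color $c$,
\begin{align*}
  \sum_{\substack{T \subseteq S \\ |T| = k-1}} f(T) = k c \equiv 0 \pmod{k}.
\end{align*}
By the definition of $A_n^{(k)}$, this means $S \notin A_n^{(k)}$. Hence $R$ contains a non-edge, so $R$ is not a clique, and $A_n^{(k)}$ contains no copy of $K_r^{(k)}$.

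There is essentially no obstacle here: the Ramsey number is defined precisely so that the coloring restricted to any $r$-vertex set yields a monochromatic $K_k^{(k-1)}$, and the edge rule was designed precisely so that such a monochromatic $k$-set is automatically a non-edge (because $k$ copies of the same residue sum to zero mod $k$). The only thing worth being careful about is to note that this conclusion is deterministic in the coloring $f$: it holds for every realization of the random coloring, so no probabilistic estimate is needed at this step.
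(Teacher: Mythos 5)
Your proof is correct and matches the paper's argument exactly: both apply the Ramsey hypothesis to find a monochromatic $K_k^{(k-1)}$ inside any $r$-vertex set, and then observe that the $k$ equal-colored $(k-1)$-subsets sum to $0 \pmod{k}$, forcing a non-edge. The only cosmetic difference is that you argue directly that every $r$-set contains a non-edge, while the paper frames it as a contradiction starting from an assumed clique.
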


\begin{proof} 
Let $X \subseteq V(A_n^{(k)})$ be such that $|X| = r$ and $A_n^{(k)}[X]$ is a clique.  Then by the property of
$r$, there exists a $Y \subseteq X$ such that $|Y| = k$ and all $(k-1)$-subsets of $Y$ have the same
color $c$.  But now
\begin{align*}
  \sum_{\substack{T \subseteq Y \\ |T| = k-1}} f(T) = ck = 0 \pmod k.
\end{align*}
Thus $Y \notin A_n^{(k)}$, which contradicts that $A_n^{(k)}[X]$ is a clique.
\end{proof} 

To show that $A_n^{(k)}$ satisfies \discIk when $\mathcal{I} \neq \binom{[k]}{k-1}$, we will use a theorem
of Towsner~\cite{hqsi-towsner14} that equates $\mathcal{I}$-discrepency with counting
$\mathcal{I}$-adapted hypergraphs.  Therefore, we prove that the count of any $\mathcal{I}$-adapted
hypergraph $F$ in $A_n^{(k)}$ is correct with high probability.

\begin{lemma} \label{lem:constr-sat-count}
  Let $p = \frac{k-1}{k}$ and let $\mathcal{I} \subseteq 2^{[k]}$ be an antichain such that
  $\mathcal{I} \neq \binom{[k]}{k-1}$.  Let $F$ be an $\mathcal{I}$-adapted $k$-graph.  For every
  $\mu > 0$, with probability going to one as $n$ goes to infinity, the number of labeled copies of
  $F$ in $A^{(k)}_n$ satisfies
  \begin{align*}
    \left| \inj[F \rightarrow A^{(k)}_n] - p^{|F|}n^{v(F)} \right| < \mu n^{v(F)}.
  \end{align*}
\end{lemma}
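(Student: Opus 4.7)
The plan is a standard second moment argument: estimate $\mathbb{E}[\inj[F \to A_n^{(k)}]]$ and $\mathrm{Var}(\inj[F \to A_n^{(k)}])$, and then apply Chebyshev's inequality. For a fixed injection $\psi : V(F) \to V(A_n^{(k)})$ I will associate to each edge $e \in F$ the random variable $\sigma_e := \sum_{T \subseteq \psi(e),\,|T|=k-1} f(T) \bmod k$, so that $\psi$ is edge-preserving exactly when $\sigma_e \neq 0$ for every $e \in F$. The core claim, from which everything else flows, is that the joint distribution of $(\sigma_{E_1},\dots,\sigma_{E_m})$ over the edges of $F$ is uniform on $(\mathbb{Z}/k)^m$; once this is proved, $P(\psi \text{ edge-preserving}) = p^{|F|}$ exactly, and summing over the $(n)_{v(F)}$ injections gives $\mathbb{E}[\inj] = p^{|F|} n^{v(F)} + O(n^{v(F)-1})$.

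To prove the uniformity I will use the ordering $E_1,\dots,E_m$ and bijections $\phi_i:E_i\to[k]$ witnessing that $F$ is $\mathcal{I}$-adapted. Because $\mathcal{I}$ is an antichain with $\mathcal{I}\neq\binom{[k]}{k-1}$, there is some $(k-1)$-subset $S^*\subseteq[k]$ that is not contained in any member of $\mathcal{I}$. For each $i$ I take $v_i\in E_i$ to be the unique vertex with $\phi_i(v_i)=[k]\setminus S^*$, so that $\phi_i(E_i\setminus\{v_i\})=S^*$; the definition of $\mathcal{I}$-adapted then forces $E_i\setminus\{v_i\}$ to not be a subset of any earlier $E_j$, because otherwise we would have $E_j \cap E_i = E_i\setminus\{v_i\}$ and $S^*=\phi_i(E_j\cap E_i)$ would have to sit inside some $I\in\mathcal{I}$, contradicting the choice of $S^*$. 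Consequently, $f(\psi(E_i\setminus\{v_i\}))$ is a fresh uniform $\mathbb{Z}/k$-variable conditional on $\sigma_{E_1},\dots,\sigma_{E_{i-1}}$, and so is $\sigma_{E_i}$; chaining over $i$ yields full uniformity.

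The variance step is routine: expanding $\mathbb{E}[\inj^2]=\sum_{\psi_1,\psi_2}P(\psi_1\text{ and }\psi_2\text{ are both edge-preserving})$ and splitting by $r:=|\mathrm{image}(\psi_1)\cap\mathrm{image}(\psi_2)|$, the pairs with $r=0$ involve disjoint $f$-values so their contributions are exactly $p^{2|F|}$ per pair and sum to $(\mathbb{E}[\inj])^2+O(n^{2v(F)-1})$, while the $O(n^{2v(F)-1})$ pairs with $r\geq1$ each contribute at most $1$; hence $\mathrm{Var}(\inj)=O(n^{2v(F)-1})=o(n^{2v(F)})$. Chebyshev then gives $|\inj - \mathbb{E}[\inj]| < \mu n^{v(F)}/2$ with probability $1-o(1)$, and combining with $|\mathbb{E}[\inj] - p^{|F|}n^{v(F)}| < \mu n^{v(F)}/2$ for large $n$ finishes the proof. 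The hard part is the uniformity argument, and the hypothesis $\mathcal{I}\neq\binom{[k]}{k-1}$ is essential there: without it one can check, e.g.\ for $F=K^{(3)}_5$, that $\sum_{e\in F}\sigma_e\equiv 0\pmod 3$ identically, so $(\sigma_e)_{e\in F}$ is not uniform and $\mathbb{E}[\inj]$ deviates from $p^{|F|}n^{v(F)}$ by a nonvanishing multiplicative factor.
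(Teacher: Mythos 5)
Your proposal is correct and follows essentially the same route as the paper: both arguments pick a $(k-1)$-subset of $[k]$ outside $\mathcal{I}$, use the $\mathcal{I}$-adapted ordering to show that the corresponding $(k-1)$-subset of $Q(E_i)$ is not shared with any earlier $E_j$, conclude that the color sum $\sigma_{E_i}$ is uniform conditional on the earlier ones, and finish with a second-moment/Chebyshev bound. Your formulation via joint uniformity of $(\sigma_{E_1},\dots,\sigma_{E_m})$ and the observation that $\sum_e \sigma_e \equiv 0$ for $K_5^{(3)}$ are nice expository touches but represent the same underlying argument.
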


\begin{proof} 
Let $E_1, \dots, E_m$ be the ordering of edges in the definition of $F$ being $\mathcal{I}$-adapted.
First we shows that if $Q : V(F) \rightarrow V(A_n^{(k)})$ is any injection, then the probability that
$Q(E_i) \in A_n^{(k)}$ is exactly $p$ independently of if the edges $E_j$ with $j < i$ map to hyperedges
or not.  Indeed, since $\mathcal{I} \neq \binom{[k]}{k-1}$, let $I \in \binom{[k]}{k-1} -
\mathcal{I}$.  Now consider some $E_i$ and let $\phi_i : E_i \rightarrow [k]$ be the bijection from
the definition of $F$ being $\mathcal{I}$-adapted.  Now since $I \notin \mathcal{I}$, there is no $j
< i$ such that $\phi_i(E_i \cap E_j) = I$.  Thus conditioning on if the edges $E_j$ with $j < i$ map
to edges of $A_n^{(k)}$ or not potentially fixes the colors on $(k-1)$-subsets of $Q(E_i)$ besides the
$(k-1)$-subset indexed by $I$.  Since the color of $\{Q(x) : x \in E_i, \phi_i(x) \in I \}$ (which
has size $k-1$) has probability exactly $p$ to make the color sum of $Q(E_i)$ once all other colors
are fixed, with probability $p$ we have that $Q(E_i)$ is an edge.

Therefore, the probability that $Q$ is an edge-preserving map is $p^{|F|}$.  This implies that the
expected number of labeled copies of $F$ in $A_n^{(k)}$ is $p^{|F|} n (n-1) \cdots (n-v(F) + 1)$.  A
simple second moment calculation shows that with high probability the number of labeled copies of
$F$ in $A_n^{(k)}$ is $p^{|F|} n^{v(F)} \pm \mu n^{v(F)}$ for large $n$.
\end{proof} 

Lastly, we need to show that $A_n^{(k)}$ satisfies \discImone in every link for every $\mathcal{J}$.  We
could do that similar to the previous lemma by showing that the count of $\mathcal{J}$-adapted
$k$-graphs is correct, but instead are able to directly show that \discImone holds.

\begin{lemma} \label{lem:constr-sat-J-in-link}
  Let $\mathcal{J} \subseteq 2^{[k-1]}$ be an antichain and $\alpha = \frac{k-1}{k}$.  Then for
  every $\mu > 0$, with probability going to one as $n$ goes to infinity, $L(x)$ satisfies
  \discImone for each $x \in V(A_n^{(k)})$.
\end{lemma}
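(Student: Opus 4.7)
The plan is to condition on the randomness involving $x$ in order to reduce $L(x)$ to a binomial random $(k-1)$-graph, and then to invoke the fact that such a random hypergraph satisfies every discrepancy condition with high probability.

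For the conditioning step, fix $x \in V(A_n^{(k)})$ and condition on the values $\{f(S) : x \in S,\ |S|=k-1\}$ (the ``star'' of $x$). For each $(k-1)$-set $T$ disjoint from $\{x\}$, the $(k-1)$-subsets of $T \cup \{x\}$ are $T$ together with $(T \cup \{x\}) \setminus \{t\}$ for $t \in T$, and all of the latter contain $x$ and so are fixed by the conditioning. Setting $c_T := -\sum_{t \in T} f((T \cup \{x\}) \setminus \{t\}) \pmod k$, the defining condition for $T \cup \{x\}$ being an edge of $A_n^{(k)}$ becomes $f(T) \not\equiv c_T \pmod k$. Since the $f(T)$ for distinct $T$ disjoint from $\{x\}$ are i.i.d.\ uniform on $\mathbb{Z}/k\mathbb{Z}$, conditionally on the star of $x$ the link $L(x)$ is a binomial random $(k-1)$-graph on $n-1$ vertices, each potential edge present independently with probability $\alpha = (k-1)/k$.

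For the discrepancy step, it remains to show that the binomial random $(k-1)$-graph $G := G^{(k-1)}(n-1, \alpha)$ satisfies \discImone with probability at least $1 - o(1/n)$. By Towsner's equivalence~\cite{hqsi-towsner14} (the same tool used in the proof of Lemma~\ref{lem:constr-sat-count}), this is implied by the count of labeled copies of each $\mathcal{J}$-adapted $(k-1)$-graph $H$ in some finite family $\mathcal{F} = \mathcal{F}(\mathcal{J}, \mu)$ agreeing with $\alpha^{|H|} n^{v(H)}$ up to an additive error of $\mu' n^{v(H)}$ for a suitable $\mu' > 0$. For each fixed $H \in \mathcal{F}$, this count is a bounded-degree polynomial in the independent edge-indicators of $G$ with expectation asymptotically $\alpha^{|H|} n^{v(H)}$, and an edge-exposure martingale combined with Azuma's inequality (noting that each edge flip changes the count by at most $O(n^{v(H)-k+1})$) yields deviations exceeding $\mu' n^{v(H)}$ with probability at most $\exp(-\Omega(n^{k-1}))$. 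A union bound over the finite family $\mathcal{F}$ concludes the step.

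Since the bound above is uniform in the star conditioning, the unconditional probability that $L(x)$ fails \discImone is at most $\exp(-\Omega(n^{k-1}))$ for each fixed $x$, and a final union bound over the $n$ choices of $x \in V(A_n^{(k)})$ yields total failure probability $o(1)$. The main obstacle is that the collection of $\mathcal{J}$-layouts has size $2^{\Theta(n^{k-1})}$, far too large for a naive Chernoff-plus-union-bound over $\Lambda$ when $\mu$ is small; routing through Towsner's equivalence of discrepancy with counting adapted hypergraphs is what lets us replace this by the concentration of finitely many subgraph counts in $G^{(k-1)}(n-1,\alpha)$, each of which is standard.
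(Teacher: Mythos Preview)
Your conditioning step is exactly the paper's: fix the colours of all $(k-1)$-sets through $x$, after which each remaining $(k-1)$-set $T$ is an edge of $L(x)$ iff $f(T)$ avoids one specific residue, so $L(x)$ is distributed as $G^{(k-1)}(n-1,\alpha)$.

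Where you diverge is in the discrepancy step, and your stated reason for the detour is mistaken. You assert that the family of $\mathcal{J}$-layouts has size $2^{\Theta(n^{k-1})}$, too large for Chernoff plus a union bound. But $\mathcal{J}\subseteq 2^{[k-1]}$ is an antichain with (in every case where the statement is non-vacuous) $[k-1]\notin\mathcal{J}$, so every $J\in\mathcal{J}$ has $|J|\le k-2$. Hence each $\lambda_J$ ranges over at most $2^{\binom{n-1}{k-2}}$ hypergraphs, and the total number of $\mathcal{J}$-layouts is only $2^{O(n^{k-2})}$. The paper exploits precisely this: it splits $K_{k-1}(\Lambda)$ into at most $(k-1)!$ classes of tuples with no two being permutations of one another, applies Chernoff to each class to get failure probability $e^{-\Omega(n^{k-1})}$ for a fixed $\Lambda$, and then takes a direct union bound over the $2^{O(n^{k-2})}$ layouts and the $n$ vertices.

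Your route through Towsner's equivalence and Azuma on subgraph counts is correct and yields the same $e^{-\Omega(n^{k-1})}$ tail, so the final union bound over $x$ still succeeds. But it is a genuine detour: you replace a one-line Chernoff estimate by a black-box reduction to counting plus a martingale argument, and the obstacle you cite to motivate this---that there are $2^{\Theta(n^{k-1})}$ layouts---comes from overcounting the exponent by a factor of~$n$.
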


\begin{proof} 
Fix $x \in V(A_n^{(k)})$ and view $L_{A_n^{(k)}}(x)$ as a probability distribution over $(k-1)$-graphs with
vertex set $V(A_n^{(k)}) - x$.  That is, an element from this probability distribution is generated by
first generating $A_n^{(k)}$ and then outputting the link of $x$.  We claim that the probability
distribution $L(x)$ is isomorphic to the probability distribution
$G^{(k-1)}(n-1,\alpha)$.  To see this, consider $S \in \binom{V(A_n^{(k)}) - x}{k-1}$.  Then $S \in L(x)$
if
\begin{align*}
  \sum_{\substack{T \subseteq S \cup \{x\} \\ |T| = k-1}} f(T) \neq 0 \pmod k.
\end{align*}
We could rewrite this as
\begin{align*}
  f(S) \neq \sum_{\substack{T \subseteq S \\ |T| = k-2}} f(T \cup x) \pmod k.
\end{align*}
The sum on the left hand side is some integer $w_S$ between $0$ and $k-1$, so that $S$ is a
hyperedge of $L(x)$ if and only if the color of $S$ is not $w_S$.  Since this is for every $S$ and
the colors assigned to $S$ are mutually independent, $L(x)$ is isomorphic to
$G^{(k-1)}(n-1,\alpha)$.

The proof is now complete, since for large $n$ $G^{(k-1)}(n-1,\alpha)$ satisfies \discImone with
very high probability as follows.  Fix any $\mathcal{J}$-layout $\Lambda$.  Each $(k-1)$-clique in
$\Lambda$ is a hyperedge with probability $\alpha$ and two $(k-1)$-cliques are independent unless
one is a permutation of the other.  So divide $K_{k-1}(\Lambda)$ up into at most $(k-1)!$ sets $R_1,
\dots, R_{(k-1)!}$ such that within a single $R_i$ there are no $(k-1)$-tuples which are
permutations of each other.  Then the expected size of $H \cap R_i$ is $\alpha |R_i|$ and by
Chernoff's inequality,
\begin{align*}
  \mathbb{P}\Big[ \Big| |H \cap R_i| - \alpha |R_i| \Big| > \epsilon n^{k-1} \Big] < 2
  e^{-\epsilon^2 n^{2k-2}/2|R_i|}.
\end{align*}
Since $|R_i| \leq n^{k-1}$, the probability is at most $e^{-c n^{k-1}}$ for some constant $c$.
There are $(k-1)!$ sets $R_i$ and there are at most $2^{k-2} 2^{n^{k-2}}$ $\mathcal{J}$-layouts $\Lambda$, so
with probability at most $e^{-c' n^{k-1}}$, the link of $x$ fails \discImone.  There are $n$
vertices of $A_n^{(k)}$, so with probability at most $n e^{-c' n^{k-1}} \rightarrow 0$, there is some vertex $x$
of $A_n^{(k)}$ whose link fails \discImone.
\end{proof} 

\begin{proof}[Proof of Proposition~\ref{prop:constr-fail-kminusone}] 
As mentioned previously, to show that $A_n^{(k)}$ satisfies \discIk, we combine
Lemma~\ref{lem:constr-sat-count} with a theorem of Towsner~\cite{hqsi-towsner14} which is stated in
the language of $k$-graph sequences.  Converting from the probability distribution $A_n^{(k)}$ to a
$k$-graph sequence is very similar to the proofs of~\cite[Lemmas 30 and 31]{hqsi-lenz-poset12} so we
only briefly sketch the technique here.  By the previous lemmas and the probabilistic method, for
every $\mu > 0$ there exists an $n_0$ such that for every $n \geq n_0$ there exists some $k$-graph
satisfying the properties in the previous lemmas (has the right edge density, fails \cdk, no copy of
$K_r$, has the right count of all $\mathcal{I}$-adapted hypergraphs, and satisfies \discImone in the
links).  Construct a $k$-graph sequence $\mathcal{H} = \{H_n\}_{n \in \mathbb{N}}$ by diagonalization
by setting $\mu = \frac{1}{n}$.  

By Lemma~\ref{lem:constr-sat-count}, $\mathcal{H}$ satisfies the property that for every
$\mathcal{I}$-adapted $F$, $\lim_{n \rightarrow \infty} t_F(H_n) = p^{|F|}$ so by
\cite[Theorem 1.1]{hqsi-towsner14} $\mathcal{H}$ is \texttt{Disc}$_p[\mathcal{I}]$ (where $t_F(H_n)$
and \texttt{Disc}$_p[\mathcal{I}]$ are defined in \cite{hqsi-towsner14}).  Thus for large $n$, the
$k$-graphs in the sequence $\mathcal{H}$ are the $k$-graphs which prove
Proposition~\ref{prop:constr-fail-kminusone}.
\end{proof} 

\begin{proof}[Proof of Proposition~\ref{prop:constr-fail-links}] 
Let $G = G^{(k)}(n,p)$ be the random $k$-graph with density $p$.  Modify $G$ by picking a single
vertex $x \in V(G)$, removing all edges which contain $x$, and adding edges so that $L(x) =
A^{(k-1)}_n$.  Now the link of $x$ has no copy of $K^{(k-1)}_r$ so that $G$ has no perfect
$K^{(k)}_{r+1}$-packing.  Also, $G$ satisfies \cdk since the random $k$-graph satisfies \cdk (see
the proof of Lemma~\ref{lem:constr-sat-J-in-link}) and we only modified at most $n^{k-1}$
hyperedges.  By the previous lemmas, the link of $x$ fails \cdkmone and satisfies \discImone for all
$\mathcal{J} \neq \binom{[k-1]}{k-2}$.
\end{proof} 

\textit{Acknowledgments:} The authors would like to thank Daniela K\"uhn for suggesting the
relationship of this work to the Hypergraph Blowup Lemma.

\bibliographystyle{abbrv}
\bibliography{refs.bib}

\begin{thebibliography}{10}

\bibitem{pp-alon96}
N.~Alon and R.~Yuster.
\newblock {$H$}-factors in dense graphs.
\newblock {\em J. Combin. Theory Ser. B}, 66(2):269--282, 1996.

\bibitem{hqsi-chung12}
F.~Chung.
\newblock Quasi-random hypergraphs revisited.
\newblock {\em Random Structures Algorithms}, 40(1):39--48, 2012.

\bibitem{hqsi-chung90}
F.~R.~K. Chung.
\newblock Quasi-random classes of hypergraphs.
\newblock {\em Random Structures Algorithms}, 1(4):363--382, 1990.

\bibitem{hqsi-chung90-2}
F.~R.~K. Chung and R.~L. Graham.
\newblock Quasi-random hypergraphs.
\newblock {\em Random Structures Algorithms}, 1(1):105--124, 1990.

\bibitem{hqsi-chung91}
F.~R.~K. Chung and R.~L. Graham.
\newblock Quasi-random set systems.
\newblock {\em J. Amer. Math. Soc.}, 4(1):151--196, 1991.

\bibitem{hqsi-chung92}
F.~R.~K. Chung and R.~L. Graham.
\newblock Cohomological aspects of hypergraphs.
\newblock {\em Trans. Amer. Math. Soc.}, 334(1):365--388, 1992.

\bibitem{qsi-chung89}
F.~R.~K. Chung, R.~L. Graham, and R.~M. Wilson.
\newblock Quasi-random graphs.
\newblock {\em Combinatorica}, 9(4):345--362, 1989.

\bibitem{hqsi-conlon12}
D.~Conlon, H.~H\`{a}n, Y.~Person, and M.~Schacht.
\newblock Weak quasi-randomness for uniform hypergraphs.
\newblock {\em Random Structures Algorithms}, 40(1):1--38, 2012.

\bibitem{pp-czgrinow13}
A.~Czygrinow, L.~DeBiasio, and B.~Nagle.
\newblock Tiling 3-uniform hypergraphs with $k_4^3 - 2e$.
\newblock to appear in Journal of Graph Theory.

\bibitem{hqsi-dellamonica11}
D.~Dellamonica, Jr. and V.~R{\"o}dl.
\newblock Hereditary quasirandom properties of hypergraphs.
\newblock {\em Combinatorica}, 31(2):165--182, 2011.

\bibitem{pp-hajnal70}
A.~Hajnal and E.~Szemer{\'e}di.
\newblock Proof of a conjecture of {P}. {E}rd{\H o}s.
\newblock In {\em Combinatorial theory and its applications, {II} ({P}roc.
  {C}olloq., {B}alatonf\"ured, 1969)}, pages 601--623. North-Holland,
  Amsterdam, 1970.

\bibitem{pp-han09}
H.~H{\`a}n, Y.~Person, and M.~Schacht.
\newblock On perfect matchings in uniform hypergraphs with large minimum vertex
  degree.
\newblock {\em SIAM J. Discrete Math.}, 23(2):732--748, 2009.

\bibitem{design-keevash14}
P.~Keevash.
\newblock The existence of designs.
\newblock http://arxiv.org/abs/1401.3665.

\bibitem{pp-keevash11}
P.~Keevash.
\newblock A hypergraph blow-up lemma.
\newblock {\em Random Structures Algorithms}, 39(3):275--376, 2011.

\bibitem{pp-keevash13}
P.~Keevash and R.~Mycroft.
\newblock A geometric theory for hypergraph matching.
\newblock to appear in Mem. Amer. Math. Soc.

\bibitem{pp-kahn11}
I.~Khan.
\newblock Perfect matchings in 4-uniform hypergraphs.
\newblock arXiv:1101.5675.

\bibitem{pp-kahn13}
I.~Khan.
\newblock Perfect matchings in 3-uniform hypergraphs with large vertex degree.
\newblock {\em SIAM J. Discrete Math.}, 27(2):1021--1039, 2013.

\bibitem{hqsi-kohayakawa10}
Y.~Kohayakawa, B.~Nagle, V.~R{\"o}dl, and M.~Schacht.
\newblock Weak hypergraph regularity and linear hypergraphs.
\newblock {\em J. Combin. Theory Ser. B}, 100(2):151--160, 2010.

\bibitem{hqsi-kohayakawa02}
Y.~Kohayakawa, V.~R{\"o}dl, and J.~Skokan.
\newblock Hypergraphs, quasi-randomness, and conditions for regularity.
\newblock {\em J. Combin. Theory Ser. A}, 97(2):307--352, 2002.

\bibitem{pp-komlos97}
J.~Koml{\'o}s, G.~N. S{\'a}rk{\"o}zy, and E.~Szemer{\'e}di.
\newblock Blow-up lemma.
\newblock {\em Combinatorica}, 17(1):109--123, 1997.

\bibitem{pp-komlos01}
J.~Koml{\'o}s, G.~N. S{\'a}rk{\"o}zy, and E.~Szemer{\'e}di.
\newblock Proof of the {A}lon-{Y}uster conjecture.
\newblock {\em Discrete Math.}, 235(1-3):255--269, 2001.
\newblock Combinatorics (Prague, 1998).

\bibitem{pp-kuhn06-cherry}
D.~K{\"u}hn and D.~Osthus.
\newblock Loose {H}amilton cycles in 3-uniform hypergraphs of high minimum
  degree.
\newblock {\em J. Combin. Theory Ser. B}, 96(6):767--821, 2006.

\bibitem{pp-kuhn09}
D.~K{\"u}hn and D.~Osthus.
\newblock The minimum degree threshold for perfect graph packings.
\newblock {\em Combinatorica}, 29(1):65--107, 2009.

\bibitem{pp-kuhn13-fractional}
D.~K\"uhn, D.~Osthus, and T.~Townsend.
\newblock Fractional and integer matchings in uniform hypergraphs.
\newblock http://arxiv.org/abs/1304.6901.

\bibitem{pp-kuhn13}
D.~K{\"u}hn, D.~Osthus, and A.~Treglown.
\newblock Matchings in 3-uniform hypergraphs.
\newblock {\em J. Combin. Theory Ser. B}, 103(2):291--305, 2013.

\bibitem{hqsi-lenz-quasi12}
J.~Lenz and D.~Mubayi.
\newblock Eigenvalues and linear quasirandom hypergraphs.
\newblock submitted. \url{http://arxiv.org/abs/1208.4863}.

\bibitem{hqsi-lenz-quasi12-nonregular}
J.~Lenz and D.~Mubayi.
\newblock Eigenvalues of non-regular linear quasirandom hypergraphs.
\newblock online at \url{http://arxiv.org/abs/1309.3584}.

\bibitem{pp-lenz14}
J.~Lenz and D.~Mubayi.
\newblock Perfect packings in quasirandom hypergraphs.
\newblock online at http://arxiv.org/abs/1402.0884.

\bibitem{hqsi-lenz-poset12}
J.~Lenz and D.~Mubayi.
\newblock The poset of hypergraph quasirandomness.
\newblock accepted in Random Structures and Algorithms.
  \url{http://arxiv.org/abs/1208.5978}.

\bibitem{pp-lo-kfour}
A.~Lo and K.~Markstr{\"o}m.
\newblock F-factors in hypergraphs via absorption.
\newblock preprint arXiv:1105.3411.

\bibitem{pp-lo13}
A.~Lo and K.~Markstr{\"o}m.
\newblock Minimum codegree threshold for {$(K_4^3-e)$}-factors.
\newblock {\em J. Combin. Theory Ser. A}, 120(3):708--721, 2013.

\bibitem{pp-markstrom11}
K.~Markstr{\"o}m and A.~Ruci{\'n}ski.
\newblock Perfect matchings (and {H}amilton cycles) in hypergraphs with large
  degrees.
\newblock {\em European J. Combin.}, 32(5):677--687, 2011.

\bibitem{pp-pikhurko08}
O.~Pikhurko.
\newblock Perfect matchings and {$K^3_4$}-tilings in hypergraphs of large
  codegree.
\newblock {\em Graphs Combin.}, 24(4):391--404, 2008.

\bibitem{pp-rodl09}
V.~R{\"o}dl, A.~Ruci{\'n}ski, and E.~Szemer{\'e}di.
\newblock Perfect matchings in large uniform hypergraphs with large minimum
  collective degree.
\newblock {\em J. Combin. Theory Ser. A}, 116(3):613--636, 2009.

\bibitem{hqsi-shapira12}
A.~Shapira1 and R.~Yuster.
\newblock The quasi-randomness of hypergraph cut properties.
\newblock {\em Random Structures Algorithms}, 40(1):105--131, 2012.

\bibitem{qsi-thomason87}
A.~Thomason.
\newblock Pseudorandom graphs.
\newblock In {\em Random graphs '85 ({P}ozna\'n, 1985)}, volume 144 of {\em
  North-Holland Math. Stud.}, pages 307--331. North-Holland, Amsterdam, 1987.

\bibitem{qsi-thomason87-2}
A.~Thomason.
\newblock Random graphs, strongly regular graphs and pseudorandom graphs.
\newblock In {\em Surveys in combinatorics 1987 ({N}ew {C}ross, 1987)}, volume
  123 of {\em London Math. Soc. Lecture Note Ser.}, pages 173--195. Cambridge
  Univ. Press, Cambridge, 1987.

\bibitem{hqsi-towsner14}
H.~Towsner.
\newblock Sigma-algebras for quasirandom hypergraphs.
\newblock available online at http://arxiv.org/abs/1312.4882.

\bibitem{pp-treglown09}
A.~Treglown and Y.~Zhao.
\newblock Exact minimum degree thresholds for perfect matchings in uniform
  hypergraphs.
\newblock {\em J. Combin. Theory Ser. A}, 119(7):1500--1522, 2012.

\bibitem{pp-treglown13}
A.~Treglown and Y.~Zhao.
\newblock Exact minimum degree thresholds for perfect matchings in uniform
  hypergraphs {II}.
\newblock {\em J. Combin. Theory Ser. A}, 120(7):1463--1482, 2013.

\end{thebibliography}

\end{document}